\documentclass[draft]{amsart}
\usepackage{amssymb,latexsym,amsfonts,verbatim,amscd}
\usepackage[usenames,dvipsnames]{pstricks}
\usepackage{epsfig}
\numberwithin{equation}{section}

\theoremstyle{plain}

\newtheorem{theorem}[equation]{Theorem}
\newtheorem{lemma}[equation]{Lemma}
\newtheorem{proposition}[equation]{Proposition}
\newtheorem{corollary}[equation]{Corollary}
\newtheorem{construction}[equation]{Construction}

\theoremstyle{definition}

\newtheorem{definition}[equation]{Definition}
\newtheorem{remark}[equation]{Remark}
\newtheorem{remarks}[equation]{Remarks}
\newtheorem{notation}[equation]{Notation}
\newtheorem{example}[equation]{Example}

\DeclareMathOperator{\ann}{ann} \DeclareMathOperator{\supp}{supp}
\DeclareMathOperator{\Tor}{Tor}

 \DeclareMathOperator{\coker}{Coker}

 \DeclareMathOperator{\im}{Im}

\DeclareMathOperator{\lcm}{lcm} 
\DeclareMathOperator{\sgn}{sgn} 

\DeclareMathOperator{\Fitt}{Fitt}

\DeclareMathOperator{\diag}{diag}
\DeclareMathOperator{\join}{join}
\DeclareMathOperator{\link}{link} \DeclareMathOperator{\ini}{in}
\DeclareMathOperator{\ima}{Im}

\def\mapright#1{\smash{\mathop{\longrightarrow}\limits^{#1}}}
\def\mapdown#1{\smash{\mathop{\downarrow}\limits^{#1}}}

\renewcommand\dim{{\rm dim}}

\renewcommand\a{\alpha}
\renewcommand\b{\beta}
\renewcommand\d{\delta}
\renewcommand\t{\tau}
\renewcommand\th{\theta}

\newcommand{\ds}{\displaystyle}

\newcommand\D{\Delta}

\newcommand{\g}{\gamma}
\newcommand{\ro}{\rho}
\newcommand{\s}{\sigma}

\begin{document}

\title[Topological Constructions for Multigraded Squarefree Modules]
{Topological Constructions for Multigraded Squarefree Modules}
\author[H. Charalambous]{Hara Charalambous}
\address{Department of Mathematics\\
Aristotle University of Greece         \\
Thessaloniki, 54124         } \email{hara@math.auth.gr}
\keywords{} \subjclass{}

\begin{abstract} Let $R=\Bbbk[x_1,\ldots, x_n]$ and $M=R^s/I$ a
multigraded squarefree module. We discuss the construction of
cochain complexes associated to $M$ and we show how to interpret
homological invariants of $M$ in terms of topological
computations. This is a generalization of the well studied  case
of squarefree monomial ideals.

\end{abstract}

\maketitle



\section{Introduction}\label{S: introd}

Let $R=\Bbbk[x_1,\ldots, x_n]$ be a polynomial ring over the field
$\Bbbk$ of characteristic $0$. For  $\a=(a_i)\in \mathbb{Z}^n$
 we let
$x^\a=x_1^{a_1}\cdots x_n^{a_n}$ and $R_\a=kx^\a$. Let
$A=(c_{ij}x^{\a_{ij}})\in \mathbb{M}_{s\times l}(R)$: $c_{ij}\in
\Bbbk$ and $\a_{ij} \in \mathbb{N}^n$. We say that  $A$ is {\em
multigraded} if each minor of $A$ equals $c_\a x^\a$ for some
$c_\a\in \Bbbk$. We say that $A$ is of {\em uniform rank} if all
of its minors are nonzero. In particular this implies that
$c_{ij}\neq 0$ for all $i$ and $j$ and thus the matrix  of
coefficients $(c_{ij})$ is sufficiently generic. In addition we
say that $\a\in \mathbb{N}^n$ is  {\em squarefree} if $\a\in
\{0,1\}^n$ and a collection of vectors is {\em squarefree} if
each vector is squarefree.

 We recall that $M$ is a multigraded $R$-module if
$M=\oplus_{\a\in \mathbb{N}^n} M_\a $ where $M_\a$ are subgroups
of $M$ and $R_{\a_1} M_{\a_2} \subset M_{\a_1+\a_2}$ whenever
$\a_i\in \mathbb{N}^n$. Moreover $w\in M$ is a multigraded element
of $M$ of multidegree $\b$ if $w\in M_\b$, and in this case we
write $\deg w=\b$. Let $M$ be  a multigraded finitely generated
$R$-module. $M$ has a minimal multigraded presentation $ \phi:
R^l\mapright{} R^s\mapright{} M \mapright{} 0$ where for a choice
of multigraded generators for $R^s$ and $R^l$, $\phi$ is
represented by $A_M$, a multigraded  {\it presentation matrix} of
$M$. We note that the data consisting of the multidegrees of the
generators of $R^s$ and $R^l$ and the matrix of coefficients
$(c_{ij})$ describes a {\it monomial} matrix as in \cite{MiSt05}.
In particular whenever $c_{ij}\neq 0$, $\a_{ij}=$ (degree of
column $j$)- ( degree of row $i$). We pay special attention to
this set of equations. Let $A=(c_{ij}x^{\a_{ij}})\in
\mathbb{M}_{s\times l}(R)$, $c_{ij}\in \Bbbk$, $\a_{ij} \in
\mathbb{N}^n$.   Whenever $c_{ij}\neq 0$ we consider the equation
$\g_j-\b_i=\a_{ij}$ with unknowns $\g_j$, $\b_i$. We assemble
these equations  to a system $E_A$ of at most $s\cdot l$ equations
and $s+l$ unknowns. The fact that $A$ is multigraded reflects the
consistency of $E_A$. For any  particular solution
$T=(\g_1,\ldots, \g_l,  \b_1,\ldots, \b_s)$ of $E_A$, $\g_j$ gives
the degree of the $j^{\rm th}$ column of $A$  and $\b_i$ the
degree of the $i^{\rm th}$ row of $A$.  Moreover for any such
solution $T$ we let $F_{1}$, $F_{0}$ be the free multigraded
modules with bases $B_{1}=\{w_j: j\in [l],\ \deg w_j=\g_j\}$  and
$B_{0}=\{ v_i: i\in [s],\ \deg v_i=\b_i\}$ respectively, and let
$\phi_T: F_{1}\mapright{} F_{0}$, $\phi_T(w_j)=\sum_i c_{ij}
x^{\a_{ij}} v_i$. The module $M_T=\coker \phi_T$ is  multigraded
and has $A$ as a  presentation matrix. We will occasionally write
$M_A$ for $M_T$.

The multigraded module $M$ is called {\em squarefree} if the
function $M_\a \rightarrow M_{\a+\b}$: $ y\mapsto x^\b y $ is a
bijection whenever $\supp(\b)\subset \supp(\a)$, see \cite{Ya00}.
In some sense it suffices to study squarefree multigraded modules:
as is shown in \cite{BrHe95} or \cite{ChDe01}, if $M$ is any
multigraded module then there is a a squarefree multigraded module
$L$ with the same homological properties as $M$. In this paper we
show that the multigraded matrix $A$ is the presentation matrix of
a squarefree module if and only if  there exists a squarefree
solution to $E_A$. Such a  matrix is called  {\em squarefree}. It
follows that all nonzero entries of a squarefree matrix have
squarefree degrees.

When $M=R/I$ and $I$ is a monomial squarefree ideal, the
simplicial complex $\D_I=\{ \{i_1,\ldots, i_t\} \subset
\{1,\ldots, n\}:\  x_{i_1}\cdots x_{i_t}\notin I\}$, is well
studied and properties of $M$ translate to combinatorial
properties of $\D_I$.  We generalize the above to any multigraded
squarefree module $M$. For this we use  a sequence of monomial
squarefree ideals that are associated to the presentation of $M$.
When the multigraded presentation matrix $A_M$ is of uniform rank
such a set of ideals is explicitly computed in terms of the rows
of $A_M$. A preliminary version of these results (without proofs)
has appeared in \cite{Ch06}.

We describe the main results in each section.  We show that if the
multigraded squarefree module $M$ has a minimal multigraded
presentation $ \phi: R^l\mapright{} R^s\mapright{}$ $ M
\mapright{} 0$ then there are $s$ squarefree monomial ideals
$I_1,\ldots, I_s$ that determine a multigraded $\Bbbk$-basis $M$,
Theorem \ref{gen_decomp} and Corollary \ref{gen-coeff_basis}. This
translates as follows in Gr\"{o}bner basis language: consider a
term order on $R^s$ based on any monomial order on $R$ and an
ordering of the multigraded bases elements $v_i, i\in [s]$ of
$R^s$; the initial module of the image of $\phi$ is a direct sum
$I_1 v_1\oplus\cdots\oplus I_s v_s$.  It follows that if $A$ is
any multigraded squarefree matrix  then there is a multigraded
squarefree module $M$ with presentation matrix $A$, Corollary
\ref{squarefree_char}.   We study the annihilator ideal of a
multigraded squarefree module $M_A$ when $A$ is an $s\times l$
matrix of uniform rank:  when $s> l$ we show that $\ann(M)=0$
while when $s\le l$ we show that $\ann(M)$ equals the radical of
the ideal generated by the $s\times s$ minors of $A$, Theorem
\ref{annihilator}.

In the next section we study in more detail the case of a
squarefree multigraded module whose presentation matrix $A_M$ is
of uniform rank. In this case we show that the squarefree monomial
ideals that determine a basis of $M$ are  generated by least
common multiples of monomials in the appropriate rows of $A_M$,
Theorem \ref{k-basis-thm}. Their intersection equals $\ann(M)$.
Thus the dimension of $M$ can be computed based on these ideals.

In the last two sections of this paper we assume $M$ to be a
squarefree multigraded module. In section 4, for each $\a\in
\mathbb{Z}^n$ we construct a cochain complex and use it to compute
the $\a$-graded betti numbers of $M$.  In the last section for
each $\a\in \mathbb{Z}^n$ we construct a complex to calculate the
$\a$-graded piece of the local cohomology of $M$.

We refer to \cite{Ei97}, \cite{BrHe98} and \cite{MiSt05} for
undefined terms and notation. We also want to thank the referee
for suggesting the more general version of Theorem
\ref{gen_decomp} and Corollary \ref{gen-coeff_basis} and the
generalization of the last two sections.

\section{Squarefree multigraded matrices}

For $\a=(a_i)\in \mathbb{Z}^n$, we let $\supp(\a)$ $= \{ i:\ a_i
\neq 0\}\subset [n]$. When $\a \in \mathbb{N}^n$ we write $\s_\a$
for $\supp(\a)$ and denote by $q_\a$ the squarefree vector so that
$\s_{\a}=\s_{q_\a}$. If $t\in \mathbb{N}$ by $[t]$ we denote the
set $\{ 1,\ldots, t\}$.

\begin{definition}\label{sq_free_def} A multigraded
$s\times l$ matrix $A=(c_{ij}x^{\a_{ij}})$ is called {\it
squarefree} if   the system $E_A$ has a squarefree solution $T$.
\end{definition}

\begin{remark} Let $M$ be a squarefree multigraded module and let $\phi$ be
a minimal multigraded presentation $\phi$ of  $M$. Since the
kernel of $\phi$ is a squarefree module \cite{Ya00} it follows
that the minimal multigraded generating sets of $M$ and  $\ker
\phi$ have squarefree degrees. Thus $A_M$ is squarefree.
\end{remark}

\noindent  Let $T=(\g_1,\ldots, \g_l,\b_1,\ldots, \b_s)$ be a
squarefree solution of $E_A$, $\phi_T: F_{1}\mapright{} F_{0}$.
For any monomial order on $R$ and an ordering of the basis
elements of $F_{0}$, we let $>$ be the following monomial order on
$F_{0}$: $u v_i>u' v_j$ if $v_i>v_j$ or $i=j$ and $u>u'$. We
denote by $\ini(\ima \phi_{T})$ the initial module of
$\ima(\phi_T)$ with respect to $>$.

\begin{theorem}\label{gen_decomp} Let $A=(c_{ij}x^{\a_{ij}})$ be
a multigraded squarefree $s\times l$ matrix, $T$  a squarefree
solution of $E_A$ and $>$  a term order on $F_{0}$ as above. There
exist squarefree monomial ideals
 $I_1, \ldots, I_s$ of $R$ such that \[ \ini(\ima
\phi_{T})= I_{1} v_1\oplus\cdots\oplus I_{s} v_s
\]
\end{theorem}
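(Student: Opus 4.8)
The plan is to show that the initial module $\ini(\ima\phi_T)$ is \emph{componentwise}, i.e.\ that it decomposes as a direct sum over the basis vectors $v_1,\dots,v_s$ of $F_0$, and then to check that each component $I_i v_i$ is generated by squarefree monomials. The first assertion is essentially built into the choice of term order: by construction $>$ is a \emph{position-over-term}–type order, refining the ordering $v_1 > v_2 > \cdots > v_s$ of the basis, so the leading term of any element of $\ima\phi_T$ sits in a single component $R v_i$. Hence, writing $I_i = \{\, u : u v_i \in \ini(\ima\phi_T)\,\}\cup\{0\}$, one gets immediately that $I_i$ is a monomial ideal of $R$ and that $\ini(\ima\phi_T) = I_1 v_1 \oplus \cdots \oplus I_s v_s$ as $R$-modules. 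So the content of the theorem is entirely in the word \emph{squarefree}.

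To prove each $I_i$ is squarefree, I would argue that the leading term of every element of $\ima\phi_T$ has squarefree multidegree, since $\ini(\ima\phi_T)$ being a multigraded submodule of $F_0$ is then automatically generated by its squarefree-multidegree monomials. Here is where the squarefreeness of $A$ enters. Take $g = \sum_j r_j \phi_T(w_j) \in \ima\phi_T$ and look at its leading term $u v_i$. Since $g$ is a combination of the columns of $A$, and the $(i,j)$-entry of $A$ has multidegree $\a_{ij} = \g_j - \b_i$ with $\g_j,\b_i \in \{0,1\}^n$ (using that $T$ is a squarefree solution), every monomial appearing in the $v_i$-component of $g$ has multidegree of the form $(\text{mult.\ of some monomial in }R)\cdot x^{\a_{ij}}\cdot(\text{mult.\ of }v_i\text{'s contribution})$; the key point is that $u v_i \in \ima\phi_T$ forces $\deg(u) + \b_i$ to be achievable as a column-degree combination. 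More precisely: I would reduce to the case $\deg g = \a$ with $\a$ not squarefree, pick a variable $x_k$ with $a_k \ge 2$, and use the squarefree-module structure underlying the construction — concretely, I would invoke (a version of) the fact that the kernel/image of a map of squarefree modules is squarefree, or directly manipulate the equations $E_A$ — to produce from $g$ an element $g'$ of strictly smaller multidegree in the same column with the same leading variable support, contradicting minimality of $\a$. An alternative, cleaner route: show that $\ima\phi_T$ itself, being the image of a map between the squarefree modules $F_1, F_0$ built from a squarefree solution $T$, is a squarefree submodule of $F_0$ (this should follow from \cite{Ya00}-style arguments given that $A_M$ is squarefree), and then use the general fact that the initial module of a squarefree module with respect to a multigraded term order is again squarefree — hence generated in squarefree multidegrees.

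The main obstacle I anticipate is the last step: proving that the initial module of a squarefree multigraded module is squarefree. For monomial ideals this is trivial, but here $\ima\phi_T$ need not be monomial, and one must rule out the following scenario — a non-squarefree monomial $u v_i$ being the leading term of some $g$ while no squarefree-multidegree element of $\ima\phi_T$ has $u v_i$ among its terms below the leading position. I would handle this by a careful degree-by-degree induction: given such a $g$ of non-squarefree multidegree $\a$ with $a_k\ge 2$, apply the squarefree bijection $M_\a \to M_{\a - e_k}$ (valid since $k \in \supp(\a-e_k)\subseteq\supp(\a)$ when $a_k\ge 2$) at the level of $F_0/\ima\phi_T$ and lift back, using multigradedness to track that the leading term only ``drops'' in the $x_k$-exponent and stays in component $v_i$; iterating lands us at a squarefree multidegree while keeping the $v_i$-component nonzero, which furnishes the needed squarefree generator. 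Balancing this lifting argument against the term order is the delicate part; everything else is bookkeeping with the system $E_A$.
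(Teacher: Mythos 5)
Your first paragraph is correct and is implicit in the paper: the order is position-over-term, so $\ini(\ima \phi_{T})$ automatically splits as $I_1v_1\oplus\cdots\oplus I_sv_s$ with each $I_i$ a monomial ideal, and the entire content of the theorem is the squarefreeness of the $I_i$. Two problems with what follows. First, the intermediate claim that ``the leading term of every element of $\ima\phi_T$ has squarefree multidegree'' is false as stated (multiply any element by $x_1^2$); what you need, and what your later reduction is implicitly aiming at, is that every leading term is divisible by a leading term of squarefree multidegree, i.e.\ that each $I_i$ is generated by its squarefree members. Second, and more seriously, the step that actually proves this is never carried out: it is deferred to ``directly manipulate the equations $E_A$,'' to ``bookkeeping,'' and to a ``delicate'' lifting argument through the quotient $F_0/\ima\phi_T$ (which is the wrong place to work: the quotient being squarefree is a \emph{consequence} of this theorem, via Corollary \ref{squarefree_char}).

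The missing computation is short and is the paper's entire proof. Take $f=\sum_t r_t\,\phi_T(w_t)\in\ima\phi_T$ multigraded of degree $\a$, with $\ini(f)=c\,x^{\a_i}v_i$. Each nonzero multigraded coefficient $r_t$ is a scalar multiple of $x^{\a-\g_t}$; since $\g_t$ is squarefree and $\g_t\le\a$ componentwise, one gets $\g_t\le q_\a$, hence $x^{\a-q_\a}$ divides $r_t$. Likewise $\b_i\le q_\a$, so $x^{\a-q_\a}$ divides $x^{\a_i}=x^{\a-\b_i}$. Therefore $f'=f/x^{\a-q_\a}$ again lies in $\ima\phi_T$, and $\ini(f')=\ini(f)/x^{\a-q_\a}=c\,x^{q_\a-\b_i}v_i$ has squarefree exponent and divides $\ini(f)$. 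This single division replaces your variable-by-variable induction, and the interaction with the term order that worries you is not delicate: dividing by a monomial that divides every term of a homogeneous element commutes with taking initial terms for any monomial order. Your alternative route (image of a map of squarefree modules is squarefree by Yanagawa, then transfer the squarefree Hilbert function to the initial module via Macaulay's theorem) is also viable and not circular, but it needs both of those facts proved or cited, whereas the direct divisibility argument above is all that is required.
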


\begin{proof}
Without loss of generality we can  assume that $v_s> \ldots
>v_1$.
 Let $f\in \im \phi$, $f$ multigraded,   $\deg f=\a$,
$\ini(f)=x^{\a_i}v_i$. Thus for $j\in [s]$ and $t\in [l]$, there
exist $c_j\in \Bbbk$ and $r_t\in R_{\a-\g_t}$ such that
\[f= \sum_{j\le i} c_j x^{\a_j} v_j=\sum_t r_t \phi_{T}(w_t)\
 .  \] For $t\in [l]$, let $d_t=\deg r_t=\a-\g_t$.
 Since $\g_t\in \{0,1\}^n$, it follows that
whenever $r_t\neq 0$, $d_t-(\a-q_\a)=q_\a-\g_t \in \{0,1\}^n$.
Moreover  since  $\a_j=\a-\b_j$ and $\b_j\in \{0,1\}^n$, it
follows that whenever  $c_j\neq 0$, ${\a_j}'=\a_j-(\a-q_\a) \in
\{0,1\}^n$.
 Thus ${r_t}'=r_t/x^{\a-q_a}\in R$ for $t\in [l]$ and
\[ f'=\sum_t {r_t}' \phi_{T}(w_t) \in \ima (\phi_{T})\ .\] Since
$\ini(f')=  c_i x^{{\a_i}'} v_i$  and $\a_i-{\a_i}' \in
\mathbb{N}^n$ we are done.
\end{proof}

We let   $M_T=\coker {\phi_{T}}$ and write $\overline{g}$ for
$g+\ima \phi_{T}$.   We note the following:

\begin{corollary}\label{gen-coeff_basis}  Let $A=(c_{ij}x^{\a_{ij}})$ be
a multigraded squarefree $s\times l$ matrix, $T$ a squarefree
solution of $E_A$, $M_T=\coker {\phi_{T}}$. There exist simplicial
complexes $\D_1,\ldots, \D_s$ such that
\begin{enumerate}
\item{} the set $B(M_T)=\{ \overline{x^\b v_i}:\ \s_\b\in \D_i, \ i\in
[s]\}$ is a $\Bbbk$-basis of $M_T$,
\item{} if $\a\in
\mathbb{N}^n$ and $\s({\a}) \notin \D_i$, then for each $j< i$
there are unique $r_{i,j,\a}\in \Bbbk$ such that $r_{i,j,\a}=0$
when $\s({{\a}+\b_i-\b_j})\notin \D_j$ and
\[\overline{x^{\a} v_i}=\ds{\sum_{j<i}}\ r_{i,j,\a}\
\overline{x^{{\a}+\b_i-\b_j} v_j}\ .\]
\end{enumerate}
\end{corollary}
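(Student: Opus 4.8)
The corollary should follow almost immediately from Theorem~\ref{gen_decomp} by translating the statement about the initial module into a statement about a monomial $\Bbbk$-basis of the cokernel. First I would set $\D_i := \D_{I_i}$, the Stanley–Reisner complex of the squarefree monomial ideal $I_i$ produced by Theorem~\ref{gen_decomp}; concretely $\s_\b \in \D_i$ iff $x^\b \notin I_i$ (equivalently, iff $x^{q_\b} \notin I_i$, using that $I_i$ is squarefree). The key general fact I would invoke is that for any submodule $N \subseteq F_0$ and any term order $>$, the images of the standard monomials (those monomials of $F_0$ not in $\ini(N)$) form a $\Bbbk$-basis of $F_0/N$. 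Applying this with $N = \ima\phi_T$ and $\ini(N) = I_1 v_1 \oplus \cdots \oplus I_s v_s$, the standard monomials are exactly the $x^\b v_i$ with $x^\b \notin I_i$, i.e. with $\s_\b \in \D_i$. That gives part~(1).

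For part~(2), suppose $\a \in \mathbb{N}^n$ with $\s(\a) \notin \D_i$, i.e. $x^\a v_i \in \ini(\ima\phi_T)$, so $x^\a v_i$ is a non-standard monomial. Using the division algorithm with respect to $>$ (normal form modulo $\ima\phi_T$), I would reduce $x^\a v_i$ to a $\Bbbk$-linear combination of standard monomials. The order $>$ on $F_0$ was defined so that $u v_i > u' v_j$ precisely when $v_i > v_j$, or $i=j$ and $u>u'$; since we arranged $v_s > \cdots > v_1$ in the proof of the theorem, every monomial appearing in the normal form of $x^\a v_i$ that involves $v_i$ is strictly $<x^\a v_i$ and hence (because $\ini$ is $I_i v_i$ in the $v_i$-slot, a monomial ideal) already lies in $\ini$ too — so by induction on the order those can be cleared, and what remains is supported on $v_j$ with $j<i$. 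Here one must be careful: the reduction steps use the generators $\phi_T(w_t) = \sum_i c_{ij} x^{\a_{ij}} v_i$, and because $A$ is multigraded, the element $x^\a v_i - (\text{its reduction})$ is homogeneous of multidegree $\a + \b_i$ (the multidegree of $v_i$ plus $\a$), so every standard monomial $x^{\a'} v_j$ occurring satisfies $\a' + \b_j = \a + \b_i$, i.e. $\a' = \a + \b_i - \b_j$. This forces the coefficient to vanish unless $x^{\a + \b_i - \b_j} v_j$ is standard, i.e. unless $\s(\a + \b_i - \b_j) \in \D_j$, which is the stated vanishing condition on $r_{i,j,\a}$. Uniqueness of the $r_{i,j,\a}$ is just uniqueness of normal forms (equivalently, the fact that $B(M_T)$ is a basis, already established in part~(1)).

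The one genuinely delicate point is the multidegree bookkeeping: I need that the degree of $v_i$ in $F_0$ is $\b_i$, that $x^\a v_i$ then has multidegree $\a + \b_i$, and that $\ima\phi_T$ is a multigraded submodule, so that normal forms preserve multidegree. All of this is built into the setup of $\phi_T$ in the introduction, so it is routine. A secondary subtlety is that $x^{\a+\b_i-\b_j}$ need not obviously lie in $\mathbb{N}^n$, but this is automatic: if $x^{\a'}v_j$ is a genuine monomial of $F_0$ occurring in an honest element of $R^s$, then $\a' \in \mathbb{N}^n$, and the relation $\a' = \a + \b_i - \b_j$ holds as vectors. I would phrase (2) so that $r_{i,j,\a}$ is simply declared $0$ whenever $\a + \b_i - \b_j \notin \mathbb{N}^n$ as well, which is subsumed under "$\s(\a+\b_i-\b_j) \notin \D_j$" once one notes $0 \notin \mathbb{N}^n$-failure is covered. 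So the main obstacle is not any hard mathematics but stating the reduction/normal-form argument cleanly enough that the homogeneity constraint $\a' = \a+\b_i-\b_j$ is visibly forced.
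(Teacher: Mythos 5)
Your proposal is correct and follows essentially the same route as the paper: take $\D_i=\D_{I_i}$ for the ideals of Theorem~\ref{gen_decomp}, get part (1) from Macaulay's Lemma (standard monomials form a basis), and get part (2) by repeatedly reducing $x^\a v_i$ modulo elements of $\ima\phi_T$ with leading term in $I_iv_i$, using multihomogeneity to force every surviving term to be $x^{\a+\b_i-\b_j}v_j$ with $j<i$. The only cosmetic difference is that the paper phrases the reduction via the explicit multigraded element $f_\a$ with $\ini(f_\a)=x^\a v_i$ rather than via the division algorithm, which also disposes of the $v_i$-terms you worried about (multihomogeneity leaves no room for a second monomial in the $v_i$-slot).
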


\begin{proof} We let $I_1,\ldots, I_s$ be the ideals of
of Theorem \ref{gen_decomp} with  $v_s> \ldots
>v_1$ and we let
$\D_i$ be the simplicial complex $\D_{I_i}$. The first part
follows from Macaulay's Lemma, see for example \cite[Theorem
15.3]{Ei97}. For the second part we note that  if $\s(\a)\notin
\D_i$ and $x^\a \in I_i$ then there is an $f_\a \in \im
(\phi_{T})$ such that $\ini (f_\a)= x^\a v_i$ and
\[ x^\a v_i - f_\a=\sum_{j< i} c_j x^{\a_j} v_j\ . \]
In particular $\a_j=\a+\b_i-\b_j$. A repeated application of this
remark gives the desired result.
\end{proof}

The next corollary justifies the definition of a squarefree
matrix.

\begin{corollary}\label{squarefree_char} Let $A$ be a multigraded squarefree matrix. Then $A$
is the presentation matrix of a multigraded squarefree module $M$.
\end{corollary}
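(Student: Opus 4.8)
The plan is to start from a squarefree solution $T=(\g_1,\ldots,\g_l,\b_1,\ldots,\b_s)$ of $E_A$, form the multigraded map $\phi_T\colon F_1\to F_0$ and set $M=M_T=\coker\phi_T$. By construction $A$ is a presentation matrix of $M$, so everything reduces to checking that $M$ is squarefree, i.e.\ that multiplication $M_\a\to M_{\a+\b}$, $y\mapsto x^\b y$, is bijective whenever $\supp(\b)\subset\supp(\a)$. I would first dispose of the case $\a\notin\mathbb{N}^n$ (then $M_\a=0$ and there is nothing to check) and reduce to $\a\in\mathbb{N}^n$; replacing $\b$ by its part outside $\supp(\a)$ one may even assume $\supp(\b)\subseteq\supp(\a)$ with $\a+\b$ still having the same support as $\a$.

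The key tool is Corollary \ref{gen-coeff_basis}: fixing the term order on $F_0$ coming from $v_s>\cdots>v_1$, we get simplicial complexes $\D_1,\ldots,\D_s$ so that $B(M_T)=\{\overline{x^\b v_i}:\s_\b\in\D_i\}$ is a $\Bbbk$-basis of $M_T$. Since the ideals $I_i$ are squarefree monomial ideals, each monomial $x^\a v_i$ is in the initial module exactly when $\s_\a\notin\D_i$, and this depends only on $\s_\a$, not on $\a$ itself. This is the crucial ``squarefree'' feature that I would exploit: the ``shape'' of the normal-form basis element depends only on supports.

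Concretely, to prove injectivity and surjectivity of multiplication by $x^\b$ on the degree-$\a$ piece, I would compare the $\Bbbk$-bases of $M_\a$ and $M_{\a+\b}$. The basis of $M_\a$ is indexed by those $i\in[s]$ with $\s(\a-\b_i)\in\D_i$ (equivalently $\b_i\le\a$ and $\s_{\a-\b_i}\in\D_i$), with basis element $\overline{x^{\a-\b_i}v_i}$; similarly for $M_{\a+\b}$. Because $\supp(\b)\subset\supp(\a)$ and $\a,\a+\b$ have the same support, $\s_{\a-\b_i}=\s_{(\a+\b)-\b_i}$ whenever $\b_i\le\a$, and one checks $\b_i\le\a\iff\b_i\le\a+\b$ using that $\b_i$ is squarefree and $\supp(\b)\subseteq\supp(\a)$; hence the two index sets coincide. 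Multiplication by $x^\b$ sends $\overline{x^{\a-\b_i}v_i}$ to $\overline{x^{(\a+\b)-\b_i}v_i}$, i.e.\ it carries the basis of $M_\a$ bijectively onto the basis of $M_{\a+\b}$ — but one must verify that $x^\b\cdot\overline{x^{\a-\b_i}v_i}$ is already in normal form, which again follows because membership in $\ini(\ima\phi_T)$ is support-dependent. I expect the main obstacle to be exactly this bookkeeping: showing that applying the ``straightening'' relations of Corollary \ref{gen-coeff_basis}\,(2) commutes with multiplication by $x^\b$, so that a normal-form expression in degree $\a$ multiplies to a normal-form expression in degree $\a+\b$ with the \emph{same} coefficients $r_{i,j,\a}$; this uses that the condition ``$\s(\a+\b_i-\b_j)\notin\D_j$'' in (2) and the degree shift $\a\mapsto\a+\b_i-\b_j$ are both insensitive to adding $\b$ supported on $\supp(\a)$. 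Once that is in place, bijectivity of $x^\b\colon M_\a\to M_{\a+\b}$ is immediate, and $M$ is squarefree.
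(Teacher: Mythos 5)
Your overall strategy --- take a squarefree solution $T$, set $M=\coker\phi_T$, and deduce squarefreeness by comparing the graded pieces of the $\Bbbk$-basis from Corollary \ref{gen-coeff_basis} --- is the paper's, but the pivotal step is wrong as stated. You claim that $\s_{\a-\b_i}=\s_{(\a+\b)-\b_i}$ whenever $\b_i\le\a$ and $\supp(\b)\subseteq\supp(\a)$. This fails: if $j\in\supp(\b)\cap\supp(\b_i)$ and $\a_j=1$, then $(\a-\b_i)_j=0$ while $(\a+\b-\b_i)_j=\b_j>0$ (already for $\a=\b=\b_i=(1,0,\dots,0)$). In general one only has $\s_{\a-\b_i}\subseteq\s_{(\a+\b)-\b_i}$, so the identification of index sets comes for free in only one direction, namely $\s_{(\a+\b)-\b_i}\in\D_i\Rightarrow\s_{\a-\b_i}\in\D_i$ because $\D_i$ is closed under subsets. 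The direction that makes multiplication by $x^\b$ surjective --- that $\s_{\a-\b_i}\in\D_i$ forces $\s_{(\a+\b)-\b_i}\in\D_i$, equivalently $x^{\a+\b-\b_i}\in I_i\Rightarrow x^{\a-\b_i}\in I_i$ --- is \emph{not} a formal consequence of the $I_i$ being squarefree monomial ideals, and it is exactly where the content of the corollary sits.

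To close the gap you must reuse the division step from the proof of Theorem \ref{gen_decomp}: if $x^{\a+\b-\b_i}v_i$ is the initial term of a multigraded $f\in\ima\phi_T$ of degree $\a+\b$, then $f'=f/x^{\a+\b-q_{\a+\b}}$ still lies in $\ima\phi_T$ (this uses that the column degrees $\g_t$ are squarefree) and has initial term $x^{q_\a-\b_i}v_i$; since $x^{q_\a-\b_i}$ divides $x^{\a-\b_i}$, this yields $x^{\a-\b_i}\in I_i$. Equivalently, and perhaps more cleanly, note that multiplication by $x^{\a-q_\a}$ carries the spanning set $\{x^{q_\a-\g_t}\phi_T(w_t)\}$ of $(\ima\phi_T)_{q_\a}$ onto that of $(\ima\phi_T)_{\a}$ and is injective on the free module $F_0$, so $\dim_{\Bbbk}M_\a=\dim_{\Bbbk}M_{q_\a}$ and the induced surjection $M_{q_\a}\to M_\a$ is a bijection. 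Finally, the ``straightening commutes with multiplication'' issue you single out as the main obstacle is a red herring: once each basis element of $M_\a$ is known to map to a basis element of $M_{\a+\b}$, no normal-form computation is needed at all.
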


\begin{proof}
Let $T$ be a squarefree solution of $E_A$, $M_T=\coker
{\phi_{T}}$.  By Corollary \ref{gen-coeff_basis}, $(M_{T})_\a\cong
(M_{T})_{q_\a}$ and $M_{T}$ is squarefree.
\end{proof}

The {\it join} of $\a_1,\ldots, \a_t\in \mathbb{N}^n$ denoted
$\join(\a_1,\ldots, \a_t)$ is the vector with components the
maximum of the corresponding components of the $\a_1,\ldots,\a_t$.
We will need the following lemma.

\begin{lemma}\label{lcm_useful} Let $A=(c_{ij}x^{\a_{ij}})$ be
a multigraded $s\times l$ of uniform rank where $\a_{ij}$ are
squarefree, and let $t,q\in [l]$ and $f, i_1, \ldots, i_r \in
[s]$. Then
\[\frac{\lcm( x^{\a_{tf}},x^{\a_{ti_1}},\cdots,x^{\a_{ti_{r}}}) }{x^{\a_{tf}}}=
\frac{\lcm( x^{\a_{qf}},x^{\a_{qi_1}},\cdots,x^{\a_{qi_{r}}})
}{x^{\a_{qf}}}\ , \] or equivalently
\[\join (\a_{tf}, \a_{ti_1},\ldots, \a_{ti_{r}}) -{\a_{tf}}=
\join (\a_{qf},\a_{qi_1},\cdots,\a_{qi_{r}}) -\a_{qf}\ .\]
Moreover if $f, j_1,\ldots, j_r\in [s]$ and $ q,t\in [l]$ then
\[\join (\a_{ft}, \a_{j_1t},\ldots, \a_{j_{r}t}) -{\a_{ft}}=
\join (\a_{fq},\a_{j_1q},\cdots,\a_{j_{r}q}) -\a_{fq}\]
\end{lemma}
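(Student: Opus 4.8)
The plan is to exploit the consistency of the system $E_A$, i.e. the multigradedness of $A$, which says exactly that the entries $\a_{ij}$ satisfy $\a_{ij} = \g_j - \b_i$ for some vectors $\g_1,\dots,\g_l$ and $\b_1,\dots,\b_s$. Here the indices $i$ range over rows and $j$ over columns, so for the first identity (fix a row–index set $\{t\}\cup\{i_1,\dots,i_r\}\subset[s]$ is wrong — rather: $f,i_1,\dots,i_r$ index rows and $t,q$ index columns), I would write $\a_{tf}=\g_t-\b_f$, $\a_{ti_k}=\g_t-\b_{i_k}$, and similarly $\a_{qf}=\g_q-\b_f$, $\a_{qi_k}=\g_q-\b_{i_k}$. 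The point is that the ``column contribution'' $\g_t$ (resp. $\g_q$) is a common translate across all the vectors in one join, so it cancels when we subtract.

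The key computation is then coordinatewise and completely elementary. For a single coordinate, $\join$ is just $\max$, and $\max(\g_t - \b_f,\ \g_t-\b_{i_1},\ \dots,\ \g_t-\b_{i_r}) = \g_t - \min(\b_f,\b_{i_1},\dots,\b_{i_r})$, so that
\[
\join(\a_{tf},\a_{ti_1},\dots,\a_{ti_r}) - \a_{tf}
= \bigl(\g_t - \min(\b_f,\b_{i_1},\dots,\b_{i_r})\bigr) - (\g_t - \b_f)
= \b_f - \min(\b_f,\b_{i_1},\dots,\b_{i_r}),
\]
and the right-hand side no longer involves $\g_t$. Doing the same with $q$ in place of $t$ gives the identical vector $\b_f - \min(\b_f,\b_{i_1},\dots,\b_{i_r})$, which proves the first asserted equality. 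The translation into $\lcm$'s is just the standard dictionary between $\join$ of exponent vectors and $\lcm$ of monomials, and between subtraction of exponent vectors and division of monomials, so no separate argument is needed there. The ``moreover'' statement is the mirror image: now $f,j_1,\dots,j_r$ index rows while $t,q$ index columns, so writing $\a_{ft}=\g_t-\b_f$ etc., the common row contribution $-\b_f$ cancels and one gets $\join(\a_{ft},\a_{j_1t},\dots,\a_{j_rt}) - \a_{ft} = \max(\g_t,\g_{j_1},\dots,\g_{j_r})$-type expression minus $\g_t$, independent of which fixed column we chose; the two sides agree term by term in each coordinate.

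I do not expect a genuine obstacle here: the only thing to be careful about is bookkeeping of which indices are rows and which are columns, and the (standing) hypothesis that the $\a_{ij}$ are squarefree is not even needed for the identity — it is presumably included only because the lemma will be applied in that setting. One mild point worth a sentence in the write-up: the existence of the $\g$'s and $\b$'s is exactly a choice of solution $T$ of $E_A$, and although such a solution is not unique, the differences $\b_f - \b_{i_k}$ (equivalently the $\a_{ij}$ themselves) are determined by $A$, so the final expressions really are intrinsic to $A$ and the claimed equalities make sense. After that, the proof is three lines of coordinatewise $\max$-arithmetic.
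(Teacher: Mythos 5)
Your proof is correct, and it takes a genuinely different route from the paper's. The paper argues variable by variable: for each $x_j$ dividing the left-hand side it inspects the $2\times 2$ minor $c_{tf}c_{qi_h}x^{\a_{tf}+\a_{qi_h}}-c_{ti_h}c_{qf}x^{\a_{ti_h}+\a_{qf}}$, uses that this minor is a nonzero monomial to conclude $\a_{tf}+\a_{qi_h}=\a_{ti_h}+\a_{qf}$, and then chases which squarefree monomials $x_j$ divides. You instead package all of these relations at once into a global decomposition $\a_{ij}=\g_j-\b_i$ and reduce the lemma to the coordinatewise identity $\max_k(\g-\b_k)-(\g-\b_f)=\b_f-\min_k\b_k$. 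Your version is cleaner, handles the first identity and the ``moreover'' symmetrically, and (as you correctly note) does not use squarefreeness of the $\a_{ij}$ at all, so it proves a slightly more general statement. Two points to tighten in a write-up. First, the existence of the $\g_j,\b_i$ should not be taken as free: it is exactly the consistency of $E_A$, which the paper asserts but does not prove, and here is where \emph{uniform rank} earns its keep — all coefficients and all $2\times 2$ minors are nonzero, so every minor being a monomial forces $\a_{ij}+\a_{i'j'}=\a_{ij'}+\a_{i'j}$ for all indices, and one may set $\g_j:=\a_{1j}$, $\b_i:=\a_{11}-\a_{i1}\in\mathbb{Z}^n$ (a $\mathbb{Z}^n$-solution suffices, so there is no circularity with Proposition \ref{repr_mult_matrix}, which needs the lemma to produce a \emph{squarefree} solution). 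Making this explicit also shows that your proof and the paper's ultimately rest on the same input. Second, your description of the ``moreover'' case is garbled: the entries $\a_{ft},\a_{j_1t},\dots,\a_{j_rt}$ all lie in the fixed column $t$, so what cancels is the common column contribution $\g_t$, yielding $\b_f-\min(\b_f,\b_{j_1},\dots,\b_{j_r})$; the expression $\max(\g_t,\g_{j_1},\dots,\g_{j_r})$ does not parse since $j_1,\dots,j_r$ index rows. (The paper's own statement has the index sets for $t,q$ and $f,i_1,\dots,i_r$ swapped relative to how the first identity is later applied, so some confusion here is forgivable; the two identities are transposes of one another, and your computation covers both.)
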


\begin{proof} We will show the first equality. We note that the
last equality is a consequence of the first, since $A^T$ is
multigraded. The expressions on either side of the equation are
squarefree monomials. Suppose that the variable $x_j$ divides the
left hand side expression. This implies that $x_j$ does not divide
 $x^{\a_{tf}}$ and  $x_j$ divides $x^{\a_{ti_h}}$ for some $i_h$, where $h=1,\ldots,r$.
 Since $c_{tf}c_{qi_h} x^{\a_{tf}+\a_{qi_h}}-
c_{ti_h}c_{qf}x^{\a_{ti_h}+\a_{qf}}$ is a minor of $A$ and $A$ is
multigraded it follows that $x_j$ divides $x^{\a_{qi_h}}$ and
$x_j$ does not divide $x^{\a_{qf}}$. Thus $x_j$ divides the right
hand side of the equation.
\end{proof}

We can now prove the following:

\begin{proposition} \label{repr_mult_matrix}
Any  multigraded matrix of uniform rank whose entries have
squarefree degrees is squarefree.
\end{proposition}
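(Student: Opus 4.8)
The plan is to produce an explicit squarefree solution of the system $E_A$ directly from the hypothesis that all the exponents $\a_{ij}$ are squarefree. Since $A$ has uniform rank, every $c_{ij}$ is nonzero, so the system $E_A$ consists of all $s\cdot l$ equations $\g_j-\b_i=\a_{ij}$. The consistency of this system (i.e.\ multigradedness of $A$) is already known; the task is only to exhibit a solution whose coordinates $\g_j,\b_i$ are all in $\{0,1\}^n$. First I would normalize by declaring $\b_1=\boldsymbol 0$, which forces $\g_j=\a_{1j}$ for all $j\in[l]$; these are squarefree by hypothesis. It then remains to check that the resulting values $\b_i=\g_j-\a_{ij}=\a_{1j}-\a_{ij}$ (which are independent of $j$ by consistency of $E_A$) are again squarefree, i.e.\ lie in $\{0,1\}^n$.

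The key point is to rule out, for each variable $x_k$ and each row index $i$, the bad possibilities that the $k$-th coordinate of $\b_i=\a_{1j}-\a_{ij}$ is $-1$ or that it is undefined because the subtraction would require it. Coordinatewise this is exactly the statement: whenever $x_k$ divides $x^{\a_{ij}}$, then $x_k$ divides $x^{\a_{1j}}$. This is precisely the kind of divisibility transfer established in the proof of Lemma \ref{lcm_useful}: from the fact that $c_{1j}c_{i j'}x^{\a_{1j}+\a_{ij'}}-c_{ij}c_{1j'}x^{\a_{ij}+\a_{1j'}}$ is a minor of $A$ and $A$ is multigraded, one gets that $x_k\mid x^{\a_{ij}}$ combined with $x_k\nmid x^{\a_{1j}}$ would force a contradiction with the two monomials in the minor being scalar multiples of one another. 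Actually the cleanest route is to invoke Lemma \ref{lcm_useful} itself with the single-element join: applying $\join(\a_{tf}) - \a_{tf} = \boldsymbol 0$ is vacuous, so instead I would use the $2\times 2$ minor argument verbatim as in that lemma's proof, comparing rows $1$ and $i$ at columns $j$ and any fixed second column, to conclude $\supp(\a_{ij})\subseteq \supp(\a_{1j})$ — wait, one must be slightly careful: the minor argument gives a symmetric transfer, and what is needed is that $\a_{1j}-\a_{ij}$ has no negative coordinate, equivalently $\supp(\a_{ij})\subseteq\supp(\a_{1j})$, which is not automatic for a single pair of rows.

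So the honest plan is: do not normalize at a single row, but instead build the solution from joins over all rows. Set $\g_j=\join_{i\in[s]}(\a_{ij})$ for each $j$ — this is squarefree since a join of squarefree vectors is squarefree — and then set $\b_i=\g_j-\a_{ij}$; by Lemma \ref{lcm_useful} (the first displayed equality, with $\{i_1,\dots,i_r\}=[s]\smsm\{i\}$ and $f=i$) this difference $\join(\a_{ij}:i\in[s])-\a_{i'j}$ is independent of the column $j$, so $\b_{i'}$ is well-defined, and it is squarefree because each of its coordinates is the coordinate of a squarefree vector minus $0$ or $1$, and it is $\ge 0$ coordinatewise since $\g_j\ge\a_{ij}$ by definition of join. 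Finally one checks $\g_j-\b_i=\a_{ij}$ holds by construction. Thus $T=(\g_1,\dots,\g_l,\b_1,\dots,\b_s)$ is a squarefree solution of $E_A$, and $A$ is squarefree by Definition \ref{sq_free_def}.

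The main obstacle is exactly the well-definedness of $\b_i$: a priori $\g_j-\a_{ij}$ depends on $j$, and one needs the column-independence. This is where Lemma \ref{lcm_useful} does all the work — it is stated precisely so that $\join(\a_{1j},\dots,\a_{sj})-\a_{ij}$ does not depend on $j$ — so once the construction is set up via joins rather than via a single row, the verification is immediate. I expect the write-up to be only a few lines, consisting of: (1) define $\g_j$ and $\b_i$ by the join formulas; (2) cite Lemma \ref{lcm_useful} for well-definedness of $\b_i$; (3) note squarefreeness and nonnegativity are visible from the formulas; (4) check the defining equations of $E_A$ and apply Definition \ref{sq_free_def}.
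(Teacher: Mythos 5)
Your final construction --- $\g_j=\join(\a_{ij}:i\in[s])$ and $\b_i=\g_j-\a_{ij}$, with Lemma \ref{lcm_useful} supplying the column-independence of $\b_i$ and squarefreeness read off from the join formula --- is exactly the paper's proof (the paper fixes $j=1$ to define $\b_i$, which is the same thing). Your self-correction away from normalizing at a single row was the right move, since, as you noted, $\supp(\a_{ij})\subseteq\supp(\a_{1j})$ need not hold.
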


\begin{proof} Let $A=(c_{ij}x^{\a_{ij}})$
be a multigraded  of uniform rank, where $\a_{ij}\in \{0,1\}^n$
for $i\in [s]$, $j\in [l]$. By
 Lemma \ref{lcm_useful}  a solution to
$E_A$ is given by $\g_j=\join(\a_{ij}:\ i \in[s])$ for $j\in [l]$,
and $\b_i=\g_1-\a_{i1}$ for $i\in [s]$.
\end{proof}

\begin{remarks} $ $
\begin{itemize}
\item{} We note that when $A$ is of uniform rank then $E_A$ has
one degree of freedom.
\item{} When $A=(c_{ij}x^{\a_{ij}})$ is not of uniform rank then $E_A$ might not
have squarefree solutions even when $\a_{ij}\in \{0,1\}^n$ for
$c_{ij}\neq 0$. For example, let $R=\Bbbk[x,y]$ and
\[A=\begin{bmatrix} x& y\cr
                    0& x\end{bmatrix}.
                    \]
The general solution of $E_A$ consists of $\g_1=(2+t,s)$,
$\g_2=(1+t,1+s)$, $\b_1=(1+t,s)$, $\b_2=(t,s+1)$.
\item{} When $A=(c_{ij}x^{\a_{ij}})$ is of uniform rank and $s=1$ then $B_{0}=\{v_1: \b_1=\deg v_1=0\} $,
$B_{1}=\{w_j: \g_j=\deg w_j=\a_{1j} \}$ and $M_T=R/I$ where
$I=\langle x^{a_{11}},\ldots, x^{a_{1l}}\rangle$.
\end{itemize}
\end{remarks}

We will close this section by examining the annihilator of a
multigraded squarefree module $M$ in terms of its multigraded
presentation matrix $A$. It is easy to see that $\ann(M)$ is
generated by monomials. If $A$ is $s\times l$ and $\l\ge s$ we
denote by $\Fitt_0(M)$ the ideal of the $s\times s$ minors of $A$.
We note that the generators of $\sqrt{\Fitt_0(M)}$ are least
common multiples of the entries in a diagonal of $A$ of length
$s$. It is well known, see \cite{Ei97}, that $\Fitt_0 (M)\subset
\ann(M)$ and that $\ann(M)^s \subset \Fitt_0(M)$. In the case
where  $\ann(M)$ is a monomial ideal it follows that
$\ann(M)\subseteq \sqrt{\Fitt_0(M)}$. For what follows we write
$\diag(x^\a,s)$ for the  $s\times s$ identity  matrix times
$x^\a$. We will use the following lemma which characterizes the
elements of $\ann(M)$.

\begin{lemma}\label{ann} Let $A$ be multigraded squarefree
$s\times l$ matrix and let $M$ be a module with presentation
matrix $A$. The annihilator of $M$ consists of all monomials
$x^\a$ such that the linear system
\[ A X=\diag(x^\a,s)\]
has a solution in $\mathbb{M}_{l\times s}(R)$.
\end{lemma}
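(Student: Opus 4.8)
The plan is to unwind the definition of the annihilator in terms of the presentation $\phi_T : F_1 \to F_0$, where $F_0$ has basis $v_1,\dots,v_s$. A monomial $x^\a$ annihilates $M = \coker \phi_T$ precisely when $x^\a v_i \in \ima \phi_T$ for every $i \in [s]$; since $\ann(M)$ is a monomial ideal (as noted just before the lemma) it suffices to test monomial multiples of the generators $v_i$. Then $x^\a v_i \in \ima\phi_T$ means there is a column vector $X^{(i)} \in \mathbb{M}_{l\times 1}(R)$ with $A X^{(i)} = x^\a e_i$, where $e_i$ is the $i$th standard basis vector of $R^s$. Assembling the columns $X^{(1)},\dots,X^{(s)}$ into a single matrix $X \in \mathbb{M}_{l\times s}(R)$ gives exactly the matrix equation $AX = \diag(x^\a,s)$, and conversely any solution of that matrix equation has columns witnessing $x^\a v_i \in \ima\phi_T$ for all $i$. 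So the proof is essentially a column-by-column translation between membership in the image and solvability of the linear system.

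Concretely, I would first record that $x^\a \in \ann(M)$ iff $x^\a \overline{v_i} = 0$ in $M_T$ for all $i$, using that $v_1,\dots,v_s$ generate $M_T$ and that $R$ acts diagonally on the presentation. Next I would observe $x^\a \overline{v_i} = 0$ iff $x^\a v_i \in \ima\phi_T$, i.e.\ iff $x^\a v_i = \phi_T(g)$ for some $g \in F_1$; writing $g = \sum_t r_t w_t$ and applying $\phi_T$, the condition $\phi_T(g) = x^\a v_i$ is exactly the statement that the coordinate vector of $g$ solves $A X^{(i)} = x^\a e_i$. Finally I would assemble these $s$ vector equations into the single matrix equation and conclude both directions.

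The only subtlety — and it is minor — is justifying that one need only check the generators $v_i$ rather than arbitrary elements $x^\b v_i$: this is immediate because if $x^\a v_i \in \ima\phi_T$ for all $i$ then $x^\a (x^\b v_i) = x^\b(x^\a v_i) \in \ima\phi_T$ since $\ima\phi_T$ is an $R$-submodule, so $x^\a$ kills all of $M_T$. I expect no real obstacle here; the statement is a direct reformulation, and the main point is simply to be careful that the monomial-generation of $\ann(M)$ lets us work with monomials $x^\a$ and that reading the equation $\phi_T(g) = x^\a v_i$ in the bases $B_1$ of $F_1$ and $B_0$ of $F_0$ produces precisely the displayed linear system with $A$ as coefficient matrix.
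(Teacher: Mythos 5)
Your proposal is correct and follows essentially the same route as the paper: reduce $x^\a\in\ann(M)$ to $x^\a v_i\in\ima\phi_T$ for each $i$, translate each such membership into the consistency of $AX=C_i$ with $C_i$ the $i$th column of $\diag(x^\a,s)$, and assemble the columns. Your extra remark on why it suffices to test the generators $v_i$ is a harmless elaboration of a step the paper leaves implicit.
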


\begin{proof} Let $\phi: F_{1}\mapright{} F_{0}\mapright{}M\mapright{}0$
be such that  for a basis $\{v_i:\ i\in [s]\}$ of $F_0$ and a
basis $\{ w_j: \ j\in [l]\}$ of $F_1$, $\phi$ is represented by
$A=(c_{ij}x^{\a_{ij}})$. Since $M=F_{0}/\im (\phi)$, it follows
that $x^\a\in \ann(M)$ if and only if $x^\a v_i\in \im(\phi_{})$.
Thus $x^\a\in \ann(M)$ if and only if for any $i\in [s]$ there
exist $r_{i1},\ldots, r_{il}$ such that
\begin{equation}\label{system_ann} x^\a v_i=\ds{\sum_{j=1}^l r_{ij}
\sum_{t=1}^s } (c_{tj}x^\a_{tj}) v_t.\end{equation} We let $C_i$
be the $i$th column of $\diag(x^\a,s)$. Since $\{v_i:\  i\in
[s]\}$ is a basis for $F_{0}$ the system \ref{system_ann} is
consistent if and only if $AX=C_i$ is consistent for each $i\in
[s]$.
\end{proof}

 If $K_1\subset [s]$ and $K_2\subset [l]$ we denote by
$A[K_2,K_1]$ the submatrix of $A$ consisting of the rows indexed
by $K_1$ and columns indexed by  $K_2$.
 The next proposition computes the annihilator of the squarefree
 module $M$ when the presentation matrix is of uniform rank.

\begin{proposition}\label{annihilator}  Let $A$ be
a multigraded squarefree $s\times l$ matrix of uniform rank.
Suppose that $A$ is the presentation matrix of $M$. If $l<s$ then
$\ann(M)=0$. Otherwise $\ann(M)=\sqrt {\Fitt_0(M)}$.
\end{proposition}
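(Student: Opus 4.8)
The plan is to split into the two cases $l<s$ and $l\ge s$, using Lemma~\ref{ann} throughout: an element $x^\a$ lies in $\ann(M)$ precisely when the system $AX=\diag(x^\a,s)$ is solvable over $R$.

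For the case $l<s$, I would argue that $\ann(M)=0$ by a rank obstruction. Suppose $x^\a\in\ann(M)$ with $\a\ne 0$, so there is $X\in\mathbb{M}_{l\times s}(R)$ with $AX=\diag(x^\a,s)$. Passing to the fraction field $K=\Bbbk(x_1,\dots,x_n)$, the matrix $\diag(x^\a,s)$ becomes invertible over $K$, so $AX$ has rank $s$ over $K$; but $A$ has only $l<s$ columns, so $\rank_K(AX)\le l<s$, a contradiction. Hence no nonzero monomial annihilates $M$, and since $\ann(M)$ is a monomial ideal, $\ann(M)=0$.

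For the case $l\ge s$, I would prove both inclusions. The inclusion $\ann(M)\subseteq\sqrt{\Fitt_0(M)}$ is already noted in the text (from $\ann(M)^s\subseteq\Fitt_0(M)$ together with $\ann(M)$ being a monomial ideal). For the reverse inclusion $\sqrt{\Fitt_0(M)}\subseteq\ann(M)$, I would use the observation recorded just before the proposition: the generators of $\sqrt{\Fitt_0(M)}$ are the least common multiples $x^\beta=\lcm(x^{\a_{j_1 1}},x^{\a_{j_2 2}},\dots,x^{\a_{j_s s}})$ of the entries along a length-$s$ diagonal, i.e.\ a choice of columns $j_1,\dots,j_s\in[l]$ (one per row). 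Fix such a diagonal and set $D=A[\{j_1,\dots,j_s\},[s]]$, the corresponding $s\times s$ submatrix. Since $A$ has uniform rank, $\det D=c_\delta x^\delta\ne 0$ for some scalar $c_\delta$ and some squarefree $\delta$, and $x^\delta$ is exactly $\lcm$ of the chosen diagonal entries. The adjugate identity gives $D\cdot\mathrm{adj}(D)=\det(D)\,\mathrm{Id}=c_\delta\,\diag(x^\delta,s)$, so the $s\times s$ system $D Y=\diag(x^\delta,s)$ is solvable over $R$; padding $Y$ with zero rows for the columns of $A$ not among $j_1,\dots,j_s$ produces $X\in\mathbb{M}_{l\times s}(R)$ with $AX=\diag(x^\delta,s)$. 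By Lemma~\ref{ann}, $x^\delta\in\ann(M)$. Since the $x^\delta$ obtained this way are precisely the generators of $\sqrt{\Fitt_0(M)}$, we get $\sqrt{\Fitt_0(M)}\subseteq\ann(M)$, completing the equality.

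The main subtlety is the identification, for a matrix of uniform rank, of $\det D$ with the least common multiple $x^\delta$ of the diagonal entries — this is where multigradedness and uniform rank are both essential, and it is the content of the parenthetical claim preceding the proposition; one should verify that the degree of $\det D$ equals $\join(\a_{j_1 1},\dots,\a_{j_s s})$, which follows because $A$ being multigraded forces the degree of each minor to be a single vector, and that vector must majorize each diagonal monomial while being majorized by their join. A secondary point to check is that $\sqrt{\Fitt_0(M)}$ is generated exactly by these diagonal $\lcm$'s (rather than merely containing them), which again uses that every $s\times s$ minor of a uniform-rank multigraded matrix is a nonzero scalar times the $\lcm$ of its diagonal entries, so the radical of the ideal they generate is generated by those squarefree monomials.
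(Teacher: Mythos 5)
Your overall strategy is the same as the paper's: both cases are reduced to Lemma \ref{ann}; for $l\ge s$ you solve $A[K,[s]]\,Y=\diag(x^\delta,s)$ by Cramer's rule (your adjugate identity is the identical computation) and pad $Y$ with zero rows; and your fraction-field rank argument for $l<s$ is a clean equivalent of the paper's observation that a nonzero column of $X$ would force an $l\times l$ minor of $A$ to vanish.

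There is, however, one genuine gap, located exactly where you flagged the "main subtlety." It is \emph{not} true that $\det D=c_\delta x^\delta$ with $\delta$ squarefree and $x^\delta$ equal to the lcm of the diagonal entries. The degree of the minor is $\sum_{j\in K}\g_j-\sum_{i\in[s]}\b_i$, which majorizes the join of the diagonal degrees but need not lie in $\{0,1\}^n$. Concretely, over $R=\Bbbk[x]$ the matrix $A=\left(\begin{smallmatrix}x&x\\x&2x\end{smallmatrix}\right)$ is multigraded, squarefree and of uniform rank; its determinant is $x^2$ while the diagonal lcm is $x$, and indeed $\sqrt{\Fitt_0(M)}=\ann(M)=(x)$ whereas $D\,\mathrm{adj}(D)=x^2\,\mathrm{Id}$. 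So your adjugate argument only places the (possibly non-squarefree) monomial $x^\delta$ of each maximal minor into $\ann(M)$ --- i.e.\ it reproves $\Fitt_0(M)\subseteq\ann(M)$ --- while the generators of $\sqrt{\Fitt_0(M)}$ are the squarefree parts $x^{q_\delta}$. The missing step is the passage from $x^\delta\in\ann(M)$ to $x^{q_\delta}\in\ann(M)$, which is false for general modules. Two ways to close it: (i) use that $M$ may be taken squarefree (Corollary \ref{squarefree_char}; the annihilator does not depend on the choice of grading), so that if $x^{q_\delta}m\neq 0$ for some multigraded $m$, then multiplication by $x^{\delta-q_\delta}$ is injective on the relevant graded piece, since $\supp(\delta-q_\delta)\subseteq\supp(q_\delta+\deg m)$, whence $x^{\delta}m\neq0$; or (ii) solve $DY=\diag(x^{q_\delta},s)$ directly by Cramer's rule and check integrality of the solution: $y_{ji}$ is a scalar times $x^{q_\delta-\a_{ij}}$, which lies in $R$ because every entry of $D$ has support contained in $\supp(\delta)$ (the product of the entries along \emph{any} diagonal of $D$ has degree $\delta$). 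Option (ii) is essentially the route the paper takes.
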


\begin{proof}  If $l< s$ and $x^\a\in \ann(M)$ then by Lemma
\ref{ann} assume that $Z$
is such that
\[ A Z=\diag(x^\a,s)\ .\]
 Without loss of generality we can
assume that the first column $Z_1$ of $Z$ is nonzero. It follows
that $A[[l], \{2,\ldots, l+1\}]\cdot Z_1=0$ and thus  $\det A[[l],
\{2,\ldots, l+1\}]=0$, a contradiction since $A$ is of uniform
rank.

Suppose now that $l\ge s$. We will show that $\sqrt
{\Fitt_0(M)}\subset \ann(M)$. Let $I=\sqrt{\Fitt_0(M)}$ and let
$x^{\a}$ be a minimal generator of $I$.  It follows that there is
a set $K=\{j_1,\ldots, j_s:\ j_1<j_2<\ldots <j_s\}\subset [l]$
such that $x^{\a}=\det A[K,[s]]$. Since $A$ is multigraded it
follows that for any $i\in [s]$,  $\det
A[K,[s]]=c_{K,t}x^{\a_{ij_t}} \det A[K\setminus j_t, \{ 1,\ldots,
\hat{i},\ldots,s\}]$, where $c_{K,t}\in \Bbbk$.  For $i\in [s]$,
we let $C_i$ be as in the proof of Lemma \ref{ann}. By Cramer's
rule it follows that  the system $ A[K,[s]] X=C_i $ has a solution
$Z=(z_{j1})\in \mathbb{M}_{s\times 1}(R)$ with entries in $R$. We
extend $Z$ to a solution $Y\in \mathbb{M}_{l\times 1}(R)$ for the
system $ A X=C_i$ by setting $y_{j1}= z_{j1}$ when $ j\in K$ and
letting $y_{j1}= 0$ if $j\notin K$. Thus by Lemma \ref{ann},
$x^{\a}\in \ann(M)$ and $I\subset \ann(M)$ as desired.
\end{proof}

\section{Squarefree matrices of uniform rank}

In the previous section for any multigraded squarefree matrix $A$
and any solution $T$ of $E_A$ we proved the existence of a
sequence of squarefree monomial ideals that provide a basis for
$M_T$. In this section we  compute these ideals when $A$ is
squarefree of uniform rank.

\begin{definition}\label{basicdef} Let $A=(c_{ij}x^{\a_{ij}})$ be a multigraded
squarefree matrix of uniform rank with $l\ge s$. For $i\in [s]$ we
let
\[I_i:=\langle\  \lcm(x^{\a_{ij_1}}, \ldots,x^{\a_{i
j_{s-i+1}}}) :\ 1\le j_1<\cdots < j_{s-i+1}\le l \ \rangle \ .\]
We denote by $\D_i$ the simplicial complex $\D_{I_i}$.
\end{definition}

For the rest of this section we assume that $A$ is of uniform rank
as above.

\begin{example}\label{basic_example}
Let $R=\Bbbk[x,y,z,w]$ and
\[A=\begin{bmatrix} xy& xz\cr
                    wy& 2wz\end{bmatrix}.
                    \]
$A$ is the matrix of the $R$-module homomorphism $\phi_1:
F_1\mapright{} F_0$ where $F_1=Rw_1\oplus Rw_2$,
 $F_0= R v_1\oplus Rv_2$,  $\deg w_1=\g_1=(1,1,0,1)$, $\deg
w_2=\g_2=(1,0,1,1)$, $\deg v_1=\b_1=(0,0,0,1)$,  $\deg
v_2=\b_2=(1,0,0,0)$, $M_A=\coker \phi_1$. Here $I_1=(xyz)$ and
$I_2=(wy, wz)$. Below we graph the simplicial complexes $\D_1$,
$\D_2$.

\scalebox{1} 
{
\begin{pspicture}(0,-2.1514063)(8.559688,2.1514063)
\psdots[dotsize=0.12](0.2778125,-0.5470312)
\psdots[dotsize=0.12](1.4178125,0.61296874)
\psdots[dotsize=0.12](2.0378125,-0.62703127)
\psdots[dotsize=0.12](1.3178124,-1.7670312)
\psline[linewidth=0.04cm](0.2778125,-0.5470312)(2.0378125,-0.62703127)
\psdots[dotsize=0.12](6.6978126,0.65296876)
\psdots[dotsize=0.12](5.8578124,-0.64703125)
\psdots[dotsize=0.12](7.4578123,-0.6070312)
\psdots[dotsize=0.12](6.6578126,-1.7270312)
\psline[linewidth=0.04cm](1.3178124,0.5129688)(0.3178125,-0.44703126)
\psline[linewidth=0.04cm](0.3578125,-0.68703127)(1.2778125,-1.6470313)
\psline[linewidth=0.04cm](1.9378124,-0.72703123)(1.3378125,-1.6270312)
\psline[linewidth=0.04cm](1.4378124,0.55296874)(1.9578125,-0.42703125)
\psline[linewidth=0.04cm](1.3778125,0.5129688)(1.2978125,-1.5670313)
\psline[linewidth=0.04cm](6.6378126,0.5729687)(5.9178123,-0.50703126)
\psline[linewidth=0.04cm](5.8978124,-0.62703127)(7.3578124,-0.62703127)
\psline[linewidth=0.04cm](7.3778124,-0.72703123)(6.7378125,-1.5670313)
\psline[linewidth=0.04cm](5.8578124,-0.76703125)(6.5578127,-1.5870312)
\psline[linewidth=0.04cm,linestyle=dotted,dotsep=0.16cm](1.1178125,0.21296875)(0.9978125,-1.1470313)
\psline[linewidth=0.04cm,linestyle=dotted,dotsep=0.16cm](0.9178125,-0.04703125)(0.8178125,-0.90703124)
\psline[linewidth=0.04cm,linestyle=dotted,dotsep=0.16cm](0.5778125,-0.28703126)(0.5778125,-0.7470313)
\psline[linewidth=0.04cm,linestyle=dotted,dotsep=0.16cm](1.4178125,-0.06703125)(1.6978126,-0.02703125)
\psline[linewidth=0.04cm,linestyle=dotted,dotsep=0.16cm](1.4178125,-0.34703124)(1.7578125,-0.26703125)
\psline[linewidth=0.04cm,linestyle=dotted,dotsep=0.16cm](1.4178125,-0.5470312)(1.8778125,-0.5470312)
\psline[linewidth=0.04cm,linestyle=dotted,dotsep=0.16cm](1.3978125,-0.84703124)(1.6978126,-0.82703125)
\psline[linewidth=0.04cm,linestyle=dotted,dotsep=0.16cm](1.3978125,-1.0470313)(1.5778126,-1.0470313)
\psline[linewidth=0.04cm,linestyle=dotted,dotsep=0.16cm](6.0778127,-0.70703125)(6.1178126,-0.9870312)
\psline[linewidth=0.04cm,linestyle=dotted,dotsep=0.16cm](6.3378124,-0.72703123)(6.2778125,-1.1270312)
\psline[linewidth=0.04cm,linestyle=dotted,dotsep=0.16cm](6.4978123,-0.68703127)(6.4778123,-1.2870313)
\psline[linewidth=0.04cm,linestyle=dotted,dotsep=0.16cm](6.6578126,-0.70703125)(6.6178126,-1.3670312)
\psline[linewidth=0.04cm,linestyle=dotted,dotsep=0.16cm](6.8778124,-0.6670312)(6.8578124,-1.2070312)
\usefont{T1}{ptm}{m}{n}
\rput(1.37875,1.1029687){4}
\usefont{T1}{ptm}{m}{n}
\rput(6.849844,0.98296875){$4$}
\usefont{T1}{ptm}{m}{n}
\rput(0.0046875,-0.43703124){1}
\usefont{T1}{ptm}{m}{n}
\rput(5.4715624,-0.5570313){$1$}
\usefont{T1}{ptm}{m}{n}
\rput(1.3454688,-1.9970312){3}
\usefont{T1}{ptm}{m}{n}
\rput(6.8853126,-1.9370313){$3$}
\usefont{T1}{ptm}{m}{n}
\rput(2.2964063,-0.49703124){2}
\usefont{T1}{ptm}{m}{n}
\rput(7.80875,-0.43703124){$2$}
\usefont{T1}{ptm}{m}{n}
\rput(1.6292187,1.6829687){$\Delta_1$}
\usefont{T1}{ptm}{m}{n}
\rput(7.0,1.6829687){$\Delta_2$}
\end{pspicture}
}

\end{example}
We note that when $s=1$ then  $I_1=\langle x^{a_{11}},\ldots,
x^{a_{1l}}\rangle$ and the unique multigraded  $\Bbbk$-basis of
$R/I_1$ is the set $\{ x^{\b}: \s_\b \notin \D_{1}\}$. We recall
that if
 $T=(\g_j: j\in
[l], \b_i: i\in [s])$ is a solution of $E_A$ then  $F_{1}$,
$F_{0}$ are the free multigraded modules with bases $B_{1}=\{w_j:
j\in [l],\ \deg w_j=\g_j\}$, $B_{0}=\{ v_i: i\in [s],\ \deg
v_i=\b_i\}$ respectively, $\phi_T: F_{1}\mapright{} F_{0}$,
$\phi_T(w_j)=\sum_i c_{ij} x^{\a_{ij}} v_i$ and $M_A=\coker
\phi_T$.

 \begin{definition}\label{k-basis-def}    We  define
\[\mathcal{B}_{jA}:=\{ \overline{x^\b v_j}: \ x^\b\notin I_j\}=
\{ \overline{x^\b v_j}: \ \s_{\b}\in \D_j\} \ ,\]
\[ \mathcal{B}_A:= \bigcup_{j=1}^s \mathcal{B}_{jA}\]
\end{definition}

The elements of $\mathcal{B}_A$ are multigraded and $\deg
\overline{x^\b v_j}=\b+\b_j$.

\begin{theorem}\label{k-basis-thm} Let $A=(c_{ij}x^{\a_{ij}})$ be an
$s\times l$
 multigraded squarefree matrix of uniform rank. The set $ \mathcal{B}_A$
 is a multigraded
$\Bbbk$-basis for $M_A$.
\end{theorem}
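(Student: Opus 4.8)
The plan is to deduce the theorem from Corollary \ref{gen-coeff_basis}. Fix the squarefree solution $T$ and order $F_0$ by $v_s>\cdots>v_1$. Corollary \ref{gen-coeff_basis} then gives a $\Bbbk$-basis $B(M_T)$ of $M_A=M_T$ built from the initial ideals $J_1,\dots,J_s$ determined by $\ini(\ima\phi_T)=J_1v_1\oplus\cdots\oplus J_sv_s$ (Theorem \ref{gen_decomp}). Since the ideals $I_i$ of Definition \ref{basicdef} are squarefree monomial ideals, it suffices to prove $J_i=I_i$ for every $i\in[s]$: then the simplicial complexes occurring in Corollary \ref{gen-coeff_basis} are the $\D_i$ of Definition \ref{basicdef} and $B(M_T)=\mathcal B_A$.

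For $I_i\subseteq J_i$ I would exhibit explicit elements of $\ima\phi_T$. Given $i$ and $L=\{j_1<\cdots<j_{s-i+1}\}\subseteq[l]$, form the cofactor combination $\sum_k(-1)^k\bigl(\det A[L\smsm j_k,\{i+1,\dots,s\}]\bigr)\phi_T(w_{j_k})$; cofactor expansion shows its $v_t$-component vanishes for $t>i$ and its $v_i$-component equals $\pm\det A[L,\{i,\dots,s\}]$. Since $A$ is multigraded of uniform rank, every minor of $A$ is a nonzero scalar times a single monomial, so each complementary minor equals a scalar times a monomial $x^{\d_k}$; dividing the combination by $x^{\gcd(\d_1,\dots,\d_{s-i+1})}$ gives an element $g^{(i)}_L\in\ima\phi_T$ all of whose components are scalars times monomials. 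Expanding $\det A[L,\{i,\dots,s\}]$ along row $i$ gives $\d=\a_{ij_k}+\d_k$ for the common monomial $x^\d$ of that minor and every $k$, whence $\gcd(\d_1,\dots,\d_{s-i+1})=\d-\join(\a_{ij_1},\dots,\a_{ij_{s-i+1}})$ and the $v_i$-component of $g^{(i)}_L$ is a nonzero scalar times $\lcm(x^{\a_{ij_1}},\dots,x^{\a_{ij_{s-i+1}}})$. Because $v_i>v_t$ for $t<i$, this is $\ini(g^{(i)}_L)$, so every generator of $I_i$ lies in $J_i$.

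For $J_i\subseteq I_i$ I would prove that $G=\{g^{(i)}_L:i\in[s],\ |L|=s-i+1\}$ is a \std basis of $\ima\phi_T$. It generates $\ima\phi_T$ since $g^{(s)}_{\{j\}}=\phi_T(w_j)$, so by Buchberger's criterion it is enough that every $S$-polynomial of two members of $G$ reduces to $0$ modulo $G$. Such an $S$-polynomial vanishes when the two initial terms involve different basis vectors, so only pairs $g^{(i)}_L,g^{(i)}_{L'}$ at the same level $i$ remain. Multigradedness makes $S(g^{(i)}_L,g^{(i)}_{L'})$ homogeneous, hence again a vector with monomial components; its $v_i$-parts cancel, so it is supported on $v_1,\dots,v_{i-1}$, and by downward induction on $i$ one reduces it to $0$ using the members of $G$ at levels $<i$. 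The crux is that its $v_{i-1}$-component is divisible by $\lcm(x^{\a_{i-1,j}}:j\in L'')$ for a suitable $(s-i+2)$-subset $L''\subseteq L\cup L'$ — a Plücker-type divisibility among the exponents of the minors of $A$ which I would extract from the multigraded uniform rank hypothesis together with Lemma \ref{lcm_useful}. Granting this, $\ini(\ima\phi_T)=\langle\ini(g):g\in G\rangle=I_1v_1\oplus\cdots\oplus I_sv_s$, so $J_i=I_i$; the elements of $\mathcal B_A$ are multigraded with $\deg\overline{x^\b v_j}=\b+\b_j$ as recorded before the statement, and the proof is complete.

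The main obstacle is exactly this last divisibility — equivalently, and this gives an alternative treatment of the whole second half, the assertion that the $i$-th row of $A$ carries the kernel of $A[[l],\{i+1,\dots,s\}]$ into $I_i$. One can approach that by induction on $s-i$, peeling off one bottom row at a time, using Lemma \ref{lcm_useful} to reduce to the classical fact that the syzygies of a sequence of monomials are generated by the pairwise \emph{lcm}-syzygies, and then verifying that the lifts of those generators are again carried into $I_i$. Everything else is formal once $J_i=I_i$ is known; the genuine work is this bookkeeping with minors and least common multiples under the genericity encoded by \emph{uniform rank}.
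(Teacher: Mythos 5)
Your first half (the containment $I_i\subseteq J_i$, equivalently that every $\overline{x^\a v_i}$ with $x^\a\in I_i$ is expressible through lower basis vectors) is sound and is essentially the paper's own spanning argument: the paper produces the same elements by applying Cramer's rule to $A[K,\{t,\ldots,s\}]\,Z=\begin{bmatrix}x^\a&0&\cdots&0\end{bmatrix}^T$ and then inducting downward on $t$. The genuine gap is in the second half. You reduce the whole linear-independence direction to a single assertion --- that the $S$-polynomial $S(g^{(i)}_L,g^{(i)}_{L'})$ has leading term divisible by $\lcm(x^{\a_{i-1,j}}:j\in L'')$ for a suitable $(s-i+2)$-subset $L''\subseteq L\cup L'$ --- and you explicitly defer its proof (``which I would extract from\ldots'', ``Granting this\ldots''). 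That assertion is not a formal consequence of Lemma \ref{lcm_useful}; it is exactly the syzygy-theoretic content of the theorem, and without it the Buchberger argument does not close. Your fallback formulation (``the $i$-th row of $A$ carries $\ker A[[l],\{i+1,\ldots,s\}]$ into $I_i$'') does not reduce to the classical fact that syzygies of a list of monomials are generated by pairwise lcm-syzygies, because for $s-i>1$ the kernel in question is the syzygy module of the columns of a multi-row matrix, and controlling its generators is a nontrivial result in its own right.

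The paper closes precisely this point by an induction on $s$ together with an external input you do not invoke: for the submatrix $A'$ obtained by deleting the first row, the first syzygies of $M_{A'}$ are known explicitly from the generalized Taylor resolution of \cite{ChTc03}, so any $(r_1,\ldots,r_l)\in\ker\phi'$ has $r_j$ lying in the ideal generated by the quotients $\lcm(x^{\a_{2j}},x^{\a_{2i_1}},\ldots,x^{\a_{2i_{s-1}}})/x^{\a_{2j}}$; Lemma \ref{lcm_useful} then transfers these quotients from row $2$ to row $1$ and forces $r_jx^{\a_{1j}}\in I_1$, contradicting a putative dependence among the elements of $\mathcal{B}_{1A}$. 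To complete your Gr\"obner-basis route you must either prove your divisibility claim directly (a genuine combinatorial argument about minors of multigraded matrices of uniform rank, not mere bookkeeping) or import the resolution of \cite{ChTc03} as the paper does.
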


\begin{proof}
We use induction on $s$. For $s=1$ the theorem is clear. Let
$s>1$. First we  show that the elements of $\mathcal{B}_A$ are
linearly independent. Suppose that
\[ \sum_{j\in [s]} \sum_{i\in K_j} k_{ji} x^{\b_{ji}}v_j=\ds{\sum_{f=1}^l \sum_{t=1}^s }
r_{f}(c_{tf}x^{\a_{tf}}) v_t\] is a dependence relation;  $K_j$ is
a finite index set for each $j\in [s]$, $k_{ji}\in \Bbbk$, and
$\overline{x^{\b_{ji}} v_j} \in \mathcal{B}_{jA}$. It follows
that
\begin{equation}\label{sum_induction_start}
\sum_{i\in K_1} k_{1i} x^{\b_{1i}} v_1=\ds{\sum_{f=1}^l} r_{f}
c_{1f}x^{\a_{1f}} v_1
\end{equation} while
\begin{equation}\label{induction_submatrix}
\sum_{ t=2}^s \sum_{i\in K_t} k_{ti} x^{\b_{ji}}
v_t=\ds{\sum_{f=1}^l \sum_{t=2}^s } r_{f} c_{tf}x^{\a_{tf}}
v_t.\end{equation}

\noindent Let ${F_0}'=Rv_2\oplus\cdots\oplus Rv_s$, $\phi':
F_{1}\mapright{} {F_0}'$ be the $R$-homomorphism whose matrix with
respect to the bases $\{w_i: i\in[l]\}$ of $F_{1}$ and $\{v_i:
i\in \{2,\ldots,s\}\}$ of ${F_0}'$ is the submatrix $A'=A[
\{1,\ldots,l\}, \{2,\ldots,s\}]$ of $A$. Thus $M_{A'}={F_{0}}'/\im
\phi'$. We note that for $j=1,\ldots, s-1$ there is a one-to-one
correspondence between the elements of $\mathcal{B}_{jA'}$ and
$\mathcal{B}_{j+1A}$. Thus the expression of Equation
\ref{induction_submatrix} translates to a dependence relation for
the elements $\mathcal{B}_{A'}$. According to the induction
hypothesis this implies that $k_{ti}=0$ for $t\ge 2$ and $i\in
K_t$. Thus it remains to show that $k_{1i}=0$. We examine the
coefficients $r_f$ that appear on the right hand side of
Equations \ref{sum_induction_start} and \ref{induction_submatrix}.
Since the sum on the left hand side of
 Equation \ref{induction_submatrix} is zero, it follows that
$r_1w_1+\cdots+r_l w_l\in \ker \phi_1'$. According to the
description of the free resolution of $M_{A'}$ see \cite{ChTc03}
  it
follows that for $1\le j\le l$,
\[ r_j\in \langle\ \frac{\lcm(
x^{\a_{2j}},x^{\a_{2i_1}},\cdots,x^{\a_{2i_{s-1}}})
}{x^{\a_{2j}}}: \ 1\le i_1<\cdots<i_{s-1}\le l, i_t \neq j\
\rangle.\] By Lemma \ref{lcm_useful},
\[\frac{\lcm( x^{\a_{2j}},x^{\a_{2i_1}},\cdots,x^{\a_{2i_{s-1}}}) }{x^{\a_{2j}}}=
\frac{\lcm( x^{\a_{1j}},x^{\a_{1i_1}},\cdots,x^{\a_{1i_{s-1}}})
}{x^{\a_{1j}}}\] and $r_j x^{\a_{1j}}\in I_1$, a contradiction.

Next we show that $\mathcal{B}_A$ generates $M_A$. More precisely
we will show that if $x^\a\in I_i$ then $\overline{x^\a v_i}$ can
be written as a $\Bbbk$-linear combination of elements of
$\mathcal{B}_{1A}\cup\ldots\cup \mathcal{B}_{i-1A}$ of degree
$\a+\b_i$. We first show this for the elements of $I_1$. Let
$x^\a$ be the least common multiple of the entries in the first
row corresponding to the columns indexed by the set $K\subset
[l]$, where $|K|=s$. Since $A$ is multigraded $\det A[K, [s]]$
divides $x^\a \det A[K\setminus f,\{ 2,\ldots,s\} ]$ for any $f\in
K$. By Cramer's rule it follows that there is a matrix $Z=(z_i)\in
\mathbb{M}_{s\times 1}(R)$ such that
\[ A[K, [s]] Z=\begin{bmatrix}x^\a&
0&\cdots& 0\end{bmatrix}^T\ .\] It follows that if
$K=\{i_1,\ldots, i_s\}$ then $x^\a v_1=\phi_1(z_1 w_{i_1}+\cdots+
z_s w_{i_s})$. Therefore $\overline{x^\a v_1}=0$. We now assume
that the statement holds for $j<t$. Let $K=\{ i_1,\ldots,
i_{s-t+1}\}$ and $x^\a\in I_t$ be equal to $\lcm(x^{a_{ti_f}}:
i_f\in K)$.  Let $Z=(z_f)\in \mathbb{M}_{s-t+1\times 1}(R)$ be
such that
\[ A[K, \{t,\ldots, s\}] Z
=\begin{bmatrix}x^\a& 0&\cdots& 0\end{bmatrix}^T.\] For $w= z_1
w_{i_1}+\cdots+ z_{s-t+1} w_{i_{s-t+1}}$ we have:
\[\phi_1(w)= z_1(\sum_{j<t} c_{j i_1} x^{\a_{j i_1}} v_j)+\cdots+
           z_{s-t+1}(\sum_{j<t} c_{j i_{s-t+1}} x^{\a_{j i_{s-t+1}}}
           v_j)+ x^\a v_t\]
     \[      = \sum_{j<t} (\sum_{r=1}^{r=s-t+1} z_r c_{j i_r} x^{\a_{j i_r}}
     v_j)+x^\a v_t.\]
     Therefore
     \[ \overline{x^\a v_t}=-\sum_{j<t} (\sum_{r=1}^{r=s-t+1}c_{j i_r} \overline{z_r  x^{\a_{j i_r}}
     v_j})\]
and we are done by the induction hypothesis as applied to each of
the summands $\overline{z_r  x^{\a_{j i_r}}
     v_j}$.
\end{proof}

The next corollary is immediate and we omit its proof.

\begin{corollary} Let $A$ be an
$s\times l$
 multigraded squarefree matrix of uniform rank,
$>$ be a term order on $F_0$ based on a monomial order of $R$ with
$v_s> \cdots >v_1$. Then
\[ \ini(\im (\phi_{1}))=
I_{1}v_1\oplus\cdots\oplus I_{s} v_s\ .
\]
\end{corollary}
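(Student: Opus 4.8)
The plan is to deduce the corollary directly from Theorem \ref{k-basis-thm} together with Theorem \ref{gen_decomp}, without any fresh computation. Since $A$ is squarefree of uniform rank, Proposition \ref{repr_mult_matrix} (or its proof) gives an explicit squarefree solution $T$ of $E_A$, and the associated map $\phi_T$ is exactly the map $\phi_1$ of this section. By Theorem \ref{gen_decomp}, with respect to the term order $>$ with $v_s>\cdots>v_1$ there exist squarefree monomial ideals $J_1,\ldots,J_s$ with $\ini(\im\phi_1)=J_1v_1\oplus\cdots\oplus J_sv_s$. So the entire content of the corollary is the identification $J_i=I_i$, where $I_i$ is the explicit lcm ideal of Definition \ref{basicdef}.

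First I would note that, because $\ini(\im\phi_1)=\bigoplus_i J_iv_i$ is a monomial submodule of $F_0$, Macaulay's Lemma gives a $\Bbbk$-basis of $M_A=\coker\phi_1$, namely the residues of the standard monomials $\{\overline{x^\b v_i}:\ x^\b\notin J_i\}$. On the other hand, Theorem \ref{k-basis-thm} asserts that $\mathcal{B}_A=\{\overline{x^\b v_i}:\ x^\b\notin I_i\}$ is also a $\Bbbk$-basis of $M_A$. Thus for each $i$ the set of monomials $x^\b$ with $\overline{x^\b v_i}$ appearing in the first basis must coincide, after passing to $\Bbbk$-span inside each multidegree, with those appearing in the second; since these are literally monomial bases indexed by monomials, I would argue multidegree by multidegree. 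Fix $i$ and a multidegree $\a$. The number of standard monomials of the form $x^\b v_i$ of total module-degree $\a+\b_i$ that lie outside $J_i$, summed appropriately against the count for $J_j$ with $j<i$ in the same $M_A$-degree, is forced; but the cleanest route is: both $\{x^\b v_i: x^\b\notin J_i\}$ and $\{x^\b v_i: x^\b\notin I_i\}$, together with the analogous sets for the other indices, descend to $\Bbbk$-bases of $M_A$, and the filtration of $F_0$ by $\langle v_1,\ldots,v_k\rangle$ shows that for the top index $i=s$ the two monomial sets outside $J_s$ and outside $I_s$ have the same Hilbert function; since $J_s,I_s$ are both monomial ideals this forces $J_s=I_s$; then one peels off $v_s$ and induces downward on $i$.

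Concretely, I would set up the downward induction as follows. Consider the submodule $R v_s\subset F_0$; its image in $M_A$ is isomorphic to $R/(J_s)$ by the first decomposition and to $R/(I_s)$ by restricting $\mathcal{B}_A$ (indeed $\mathcal{B}_{sA}$ spans a complement to $\langle v_1,\ldots,v_{s-1}\rangle$-part, and by Theorem \ref{k-basis-thm} it is a basis of that piece of $M_A$). Comparing Hilbert functions of $R/J_s$ and $R/I_s$ and using that a monomial ideal is determined by its Hilbert function among monomial ideals (equivalently: two monomial ideals with the same set of standard monomials are equal), we get $J_s=I_s$. Quotienting $F_0$ by $Rv_s$ replaces $A$ by $A[[l],[s-1]]$, which is again squarefree of uniform rank, its Definition \ref{basicdef} ideals are $I_1,\ldots,I_{s-1}$, and its initial module is $J_1v_1\oplus\cdots\oplus J_{s-1}v_{s-1}$; repeating the argument yields $J_i=I_i$ for all $i$, which is the corollary.

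The main obstacle is making the passage ``same standard monomials hence same ideal'' fully rigorous in the multigraded setting when $v_i$ are intertwined: a priori the residue $\overline{x^\b v_i}$ could be a $\Bbbk$-combination of $\overline{x^{\b'}v_j}$ with $j<i$, so I cannot literally read off each $J_i$ from $M_A$ in isolation. This is precisely why the filtration $0\subset Rv_1\subset Rv_1\oplus Rv_2\subset\cdots\subset F_0$ and the matching filtration by ``last index used'' on $\mathcal{B}_A$ are needed: on each graded quotient $Rv_k$ the map is genuinely $R/J_k\to$ (that quotient of $M_A$), and Theorem \ref{k-basis-thm} guarantees $\mathcal{B}_{kA}$ maps onto a basis of the same quotient, so the comparison is between two honest cyclic modules $R/J_k$ and $R/I_k$. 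Once that bookkeeping is in place the rest is the standard fact that a monomial ideal is recovered from its standard monomials, and the corollary follows; this is why the author can reasonably call the proof immediate and omit it.
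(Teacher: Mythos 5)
Your overall strategy---reduce the corollary to identifying the abstract ideals $J_i$ of Theorem \ref{gen_decomp} with the explicit lcm ideals $I_i$ of Definition \ref{basicdef} by comparing the two $\Bbbk$-bases of $M_A$---is the right one, and your base case is fine: $M_A/\langle\overline{v_1},\ldots,\overline{v_{s-1}}\rangle$ equals $R/\langle x^{\a_{s1}},\ldots,x^{\a_{sl}}\rangle=R/I_s$ on one hand, and has the Hilbert function of $R/J_s$ on the other (because $v_s$ is the largest basis element, a multigraded element of $\im\phi_1$ with leading term on $v_s$ differs from that leading term only by terms on $v_1,\ldots,v_{s-1}$). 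But the inductive step fails on both counts. First, quotienting $F_0$ by $Rv_s$ does \emph{not} produce a matrix whose Definition \ref{basicdef} ideals are $I_1,\ldots,I_{s-1}$: for the $(s-1)\times l$ matrix on rows $1,\ldots,s-1$ the $i$-th ideal is generated by lcm's of $s-i$ entries of row $i$, whereas $I_i$ uses lcm's of $s-i+1$ entries. In Example \ref{basic_example}, deleting the second row leaves $[xy \ \ xz]$, whose ideal is $(xy,xz)\neq(xyz)=I_1$. Second, the initial module of the projected image is not $J_1v_1\oplus\cdots\oplus J_{s-1}v_{s-1}$: projecting away the \emph{largest} basis element destroys leading terms. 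Again in Example \ref{basic_example}, $\phi_1(w_1)=xy\,v_1+wy\,v_2$ has leading term $wy\,v_2$, but its projection has leading term $xy\,v_1$ with $xy\notin I_1$. (Relatedly, the image of $Rv_s$ in $M_A$ is $Rv_s/(Rv_s\cap\im\phi_1)$, which is in general not $R/J_s$; it is the quotient $M_A/\langle\overline{v_1},\ldots,\overline{v_{s-1}}\rangle$ that is.)

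The repair is to keep $A$ fixed and run the comparison along the increasing filtration $M^{(k)}=$ image of $Rv_1\oplus\cdots\oplus Rv_k$ in $M_A$: multigradedness together with $v_k>v_j$ for $j<k$ gives $M^{(k)}/M^{(k-1)}\cong R/J_k$, while the spanning argument in the proof of Theorem \ref{k-basis-thm} shows that $\mathcal{B}_{1A}\cup\cdots\cup\mathcal{B}_{kA}$ is a basis of $M^{(k)}$, so this quotient also has the Hilbert function of $R/I_k$, whence $J_k=I_k$. Even shorter, and closer to why the paper can call this immediate: the spanning argument exhibits, for each generator $x^\a$ of $I_i$, an element of $\im\phi_1$ of the form $x^\a v_i+\sum_{j<i}(\cdots)v_j$, whose leading term is $x^\a v_i$ since $v_i>v_j$; hence $I_i\subseteq J_i$. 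Then the standard monomials for $\bigoplus_i J_iv_i$ form a subset of $\mathcal{B}_A$ that is a basis by Macaulay's Lemma, and since $\mathcal{B}_A$ is linearly independent by Theorem \ref{k-basis-thm}, the two sets coincide in every multidegree, forcing $J_i=I_i$ for all $i$.
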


\begin{example} \label{coeff} Let $A$ be the matrix of  Example
\ref{basic_example} and $\a=(0,1,0,1)$.  Then  $\overline{wy\
v_2}=-\overline{xy\ v_1}$ and $r_{2, 1, \a}= -1$. When $\a=
(0,0,1,1)$ then  $\overline{wz\ v_2}=-1/2\overline{ xz\ v_1}$ and
$r_{2, 1,\a}= -1/2$. Note that $xyz v_1=\phi_1(2z\ w_1-y \ w_2)$
and $\overline{xyz v_1}=0$.
\end{example}

A different ordering of the basis elements $v_i$ one would modify
Definition \ref{basicdef} to get  a different set of ideals and a
potentially different $\Bbbk$-multigraded basis of $M_A$. For
example if $v_1>\cdots>v_s$ then   the $i^{\rm th}$ ideal should
be generated by all least common multiples of $i$ monomials in the
$i^{\rm th}$ row of $A$. Next we describe the annihilator of $M$
in terms of the ideals $I_i$.

\begin{proposition} \label{annihilator_as_intersection} Let $A=(c_{ij}x^{\a_{ij}})$ be an $s\times l$
 multigraded squarefree matrix of uniform rank, $l\ge s$. Then
\[\ann(M_A)=I_1\cap \cdots\cap I_s\ . \]
\end{proposition}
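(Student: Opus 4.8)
The plan is to prove the two inclusions separately, using the characterization of $\ann(M_A)$ from Lemma \ref{ann} together with the explicit basis $\mathcal{B}_A$ from Theorem \ref{k-basis-thm}.

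For the inclusion $I_1\cap\cdots\cap I_s\subseteq\ann(M_A)$, I would take a monomial $x^\a$ lying in every $I_i$ and show that $\overline{x^\a v_i}=0$ in $M_A$ for every $i\in[s]$, which by the proof of Lemma \ref{ann} is exactly what it means for $x^\a$ to annihilate $M_A$. The key observation is that this is essentially the content of the generation part of the proof of Theorem \ref{k-basis-thm}: there it is shown that if $x^\a\in I_i$ then $\overline{x^\a v_i}$ can be written as a $\Bbbk$-linear combination of elements $\overline{z_r x^{\a_{j i_r}}v_j}$ with $j<i$. The point is that whenever $x^\a\in I_i$, each such coefficient monomial $z_r x^{\a_{j i_r}}$ actually lies in $I_j$: indeed by Cramer's rule $z_r$ is (up to a unit) $\lcm(x^{\a_{ti_1}},\dots)/x^{\a_{ti_r}}$-type quotient for the $t=i$ row, and Lemma \ref{lcm_useful} transports this to the $j$-th row, so that $z_r x^{\a_{ji_r}}$ is a least common multiple of $s-i+1$ entries in row $j$, hence lies in $I_i\subseteq I_j$ (note $I_i\subseteq I_j$ when $j<i$ since $I_j$ is generated by lcm's of \emph{fewer} columns). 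Thus an induction on $i$ shows $\overline{x^\a v_i}=0$ whenever $x^\a\in I_i$, without needing $x^\a$ in the other ideals — so in fact each $I_i\subseteq\ann(M_A)$ individually, giving the intersection inclusion with room to spare. Actually, wait: one must be careful that $x^\a\in I_i$ alone forces $\overline{x^\a v_i}=0$; re-reading the generation argument, the induction there does establish exactly this, so $I_i\subseteq\ann(M_A)$ for each $i$ and hence $\bigcap_i I_i\subseteq\ann(M_A)$.

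For the reverse inclusion $\ann(M_A)\subseteq I_1\cap\cdots\cap I_s$, let $x^\a$ be a monomial annihilating $M_A$; then $\overline{x^\a v_i}=0$ in $M_A$ for every $i$. Fix an $i$ and suppose for contradiction that $x^\a\notin I_i$, i.e. $\sigma_\a\in\D_i$, so that $\overline{x^\a v_i}\in\mathcal{B}_{iA}$. Since $\overline{x^\a v_i}=0$ is then a nontrivial dependence relation among elements of the basis $\mathcal{B}_A$ (it is a single basis element set equal to zero), this contradicts the linear independence established in Theorem \ref{k-basis-thm}. Hence $x^\a\in I_i$ for every $i$, so $x^\a\in\bigcap_i I_i$.

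I expect the main obstacle to be the bookkeeping in the first inclusion: verifying cleanly that the coefficient monomials $z_r x^{\a_{j i_r}}$ produced by Cramer's rule land in $I_i$ (equivalently in $I_j$), which requires combining the Cramer/Laplace-expansion identities from the proof of Theorem \ref{k-basis-thm} with Lemma \ref{lcm_useful} and the containment $I_i\subseteq I_j$ for $j<i$. Once that is in place the induction on $i$ is routine. The second inclusion is short, relying only on the already-proven linear independence of $\mathcal{B}_A$.
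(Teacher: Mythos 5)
Your reverse inclusion $\ann(M_A)\subseteq I_1\cap\cdots\cap I_s$ is correct and is a clean alternative to the paper's argument (the paper instead passes through $\ann(M_A)=\sqrt{\Fitt_0(M_A)}$ from Proposition \ref{annihilator} and unpacks a maximal minor). But your forward inclusion rests on claims that are false, and the paper's own running example refutes them. First, $I_i\subseteq I_j$ for $j<i$ is backwards and wrong: by Definition \ref{basicdef}, $I_j$ is generated by least common multiples of $s-j+1$ entries of row $j$, which is \emph{more} columns, not fewer, when $j<i$; and in Example \ref{basic_example} one has $I_1=(xyz)$, $I_2=(wy,wz)$ with neither ideal contained in the other. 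Consequently an lcm of $s-i+1$ entries of row $j$ (which is what Lemma \ref{lcm_useful} gives you for $z_rx^{\a_{ji_r}}$) need not lie in $I_j$. Second, the assertion that $x^\a\in I_i$ forces $\overline{x^\a v_i}=0$ is false for $i>1$: Example \ref{coeff} has $wy\in I_2$ but $\overline{wy\,v_2}=-\overline{xy\,v_1}\neq 0$, since $xy\notin I_1$ makes $\overline{xy\,v_1}$ a basis element. The generation half of Theorem \ref{k-basis-thm} only rewrites $\overline{x^\a v_i}$ in terms of strictly smaller indices; it does not kill it. Third, even if that assertion held, it would only show $x^\a v_i\in\im\phi$ for that one $i$, whereas membership in $\ann(M_A)$ requires killing every $v_j$; so ``each $I_i\subseteq\ann(M_A)$ individually'' is a non sequitur, and it is also false in the example, where $\ann(M_A)=(xyzw)\subsetneq I_1$.

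What is genuinely needed for $I_1\cap\cdots\cap I_s\subseteq\ann(M_A)$ is a mechanism that uses membership in \emph{all} the $I_i$ simultaneously. The paper does this combinatorially: a monomial in the intersection is divisible by generators $x^{\a_i}=\lcm(x^{\a_{it}}:t\in L_i)$ with $|L_i|=s-i+1$; the cardinalities force a system of distinct representatives $K=\{i_1,\ldots,i_s\}$ with $i_t\in L_t$, and multigradedness shows $\det A[K,[s]]$ divides $\lcm(x^{\a_1},\ldots,x^{\a_s})$, so the monomial lies in $\sqrt{\Fitt_0(M_A)}=\ann(M_A)$. You would need either this route or a corrected tracking, through the rewriting process, of how the coefficient monomials accumulate divisibility by entries from all rows; as written, your induction does not close.
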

\begin{proof} We use Proposition \ref{annihilator}.
We first show that the intersection $I_1\cap \cdots\cap I_s$ is
contained in $\ann(M_A)$. Let $x^{\a_i}\in I_i$ for each $i\in
[s]$: $x^{\a_i}$ determines a (not necessarily unique) subset
$L_i\subset[l]$ of cardinality $s-i+1$ such that
$x^{\a_i}=\lcm(x^{\a_{it}}:\  t\in L_i)$. It follows that there is
a set $K=\{i_1,\ldots, i_s\}\subset [l]$ of cardinality $s$ such
that $i_t\in L_t$.  Since $A$ is multigraded it follows  that
$\det A[K, [s]]$ divides $\lcm(\a_1,\ldots,\a_s)$.

For the reverse containment, let $x^{q}$ be a generator of
$\ann(M_A)$: thus there is an ordered set $K=\{j_1,\ldots,j_s\}$
such that $cx^\a=\det A[ K, [s]]$ and $q=q_\a$.  Let
$x^{\a_i}=\lcm( x^{\a_{ij_t}}:\ t=i,\ldots, s)$. It is clear that
$x^{\a_i} \in I_i$. Since $A$ is multigraded it follows that
$x^{q_\a}=\lcm ( x^{\a_1},\ldots, x^{\a_s})$.
\end{proof}

The following is now immediate:

\begin{corollary} Let $A$ be as above. The dimension of $M_A$ is equal to the least of the
codimensions of the ideals $I_j$.
\end{corollary}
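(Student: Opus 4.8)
The plan is to read the dimension of $M_A$ off its annihilator, which was already identified in Proposition~\ref{annihilator_as_intersection}. Since $M_A$ is the cokernel of a map of finite free $R$-modules it is finitely generated, so $\dim M_A=\dim R/\ann(M_A)$. Under the standing hypothesis $l\ge s$, Proposition~\ref{annihilator_as_intersection} gives $\ann(M_A)=I_1\cap\cdots\cap I_s$, and therefore $\dim M_A=\dim R/(I_1\cap\cdots\cap I_s)$.

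Next I would invoke the elementary fact that dimension turns an intersection of ideals into a maximum. As a closed subset of $\spec R$ one has $V(I_1\cap\cdots\cap I_s)=V(I_1)\cup\cdots\cup V(I_s)$, so the minimal primes over $I_1\cap\cdots\cap I_s$ are exactly the minimal members among the minimal primes of the individual $I_j$; hence
\[
\dim R/(I_1\cap\cdots\cap I_s)=\max_{j\in[s]}\dim R/I_j .
\]
Combining the two observations gives $\dim M_A=\max_{j\in[s]}\dim R/I_j$. Since $R$ is the polynomial ring in $n$ variables, for each $j$ the codimension of $I_j$ is $n-\dim R/I_j$ (equivalently $n-1-\dim\D_j$), so $\dim M_A=n-\min_{j\in[s]}(\text{codimension of }I_j)$, which is the stated description of $\dim M_A$ in terms of the codimensions of the $I_j$.

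I do not expect a genuine obstacle: essentially all of the mathematical content is already packaged in Proposition~\ref{annihilator_as_intersection}, which is why the corollary is immediate. The one point that calls for a moment's care is the passage from the intersection ideal to the union of varieties before taking dimensions — one should not attempt to compute $\dim R/(I_1\cap\cdots\cap I_s)$ from a generating set of the intersection, but rather use that $\spec$ of a quotient by an intersection of ideals is the union of the corresponding closed sets, together with $\dim R/I_j=n-(\text{codimension of }I_j)$. Once these are in place the chain of equalities is routine.
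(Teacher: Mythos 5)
Your argument is correct and is exactly the ``immediate'' deduction the paper has in mind (the paper omits the proof entirely): combine $\dim M_A=\dim R/\ann(M_A)$ with Proposition~\ref{annihilator_as_intersection} and the fact that $V(I_1\cap\cdots\cap I_s)=V(I_1)\cup\cdots\cup V(I_s)$, so $\dim M_A=\max_j\dim R/I_j=n-\min_j\operatorname{codim} I_j$. Your closing remark rightly flags that the corollary's literal wording should be read as asserting that the \emph{codimension} of $M_A$ equals the least of the codimensions of the $I_j$, which is precisely what your chain of equalities establishes.
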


\section{Betti numbers of $M$}

Let $A=(c_{ij}x^{\a_{ij}})$ be an $s\times l$
 multigraded squarefree matrix, $T$ a squarefree solution of
 $E_A$,
 $M_A=\coker  {\phi_{T}}$. We consider the ideals $I_1,\ldots, I_s$ of
 Theorem \ref{gen_decomp} with respect to a term order induced by
 $v_s>\cdots>v_1$. We start by
assembling some notation.

\begin{notation} $ $

\begin{itemize}
\item{} For $i\in [s]$, we let $\b_i=\deg(v_i)$. For $\a \in \mathbb{Z}^n$ we let
$\a_j=\a-\b_j$.
\item{} Let $L\subset [n]$. If $t\in L$ then we write $L{\hat t}$ for
the set $L\setminus \{t\}$. If $\s\subset L$ we write $L{\hat \s}$
for the set $L\setminus \s$.
\item{} By $Lt$ we
denote the set $L\cup \{t\}$, by $L\s$  the set $L\cup \s$.
\item{} Let $L\subset[n]$. Then  $\underline{L}=(d_i)$ where $d_i=1$ if $i\in
L$ and $0$ otherwise.
\item{} Let $L=\{ i_1,\ldots, i_t\}$ where $1\le
i_1<\ldots<i_t\le n$. For $r\in [t]$ we let $\sgn[i_r,
L]=(-1)^{r+1}$. For $W\subset L$ we let
\[\sgn[W,L]:=\prod_{w\in W} \sgn[w, L].\]
\item{} Let $(K_\bullet,\th_\bullet)$ be the Koszul complex on the
variables $x_1,\ldots, x_n$. We denote the multigraded generators
of $K_j$  by $e_L$ where $L=\{ i_1,\ldots, i_j\}$ and $1\le
i_1<\ldots<i_j\le n$ and let $\deg e_L=\underline{L}$. Here
\[\th(e_L)=\sum_{t\in L} \sgn[t, L] x_t e_{L{\hat t}}\ .\]
\item{} Let $\D$ be a simplicial complex, $\t\in \D$ and $V$ the vertex set of $\D$.
We partition $V\setminus \t$ into two sets: $V_{\t,\D, 1}=\{
t\notin \t:\ \t t\in \D\}$(=$\link_\D \t$) and $V_{\t,\D, 2}=\{
t\notin \t:\ \t t\notin \D\}$.
\item{} Let $\D$ be a simplicial complex. We let $C^{j}(\D)$ be
the $\Bbbk$-vector space with bases elements $\t^*$ where $\t\in
\D$ and $|\t|=j+1$. We let $(C^\bullet(\D), d)$ be the augmented
cochain complex
\[ C^\bullet(\D):\quad 0\mapright{} C^{-1}(\D) \mapright{} C^{0}(\D) \mapright{d^0}
\cdots \mapright{} C^{n-1}(\D)\mapright{} 0
\] where
\[ d^j(\t^*)=\sum_{t\in V_{\t,\D,1}}\sgn[t, \t t ]\ (\t t)^* .\]
We let $\widetilde{H}^i (\D)=H^i(C^\bullet (\D)$.
\item{}For $\a\in \mathbb{Z}^n$, we write $\a=\a^+-\a^-$ where $\a^+,
\a^-\in \mathbb{N}^n$ and $\supp(\a^+)\cap \supp(\a^-)=\emptyset$.
\item{} Let $\D$ be a simplicial complex and $\a\in \mathbb{N}^n$.
We let \[\D_\a=  \{\  \s\subset \s_{\a}:\ \s\cup \s_{\a-q_\a}\in
\D\ \}\ .\] If $\a\in \mathbb{Z}^n\setminus \mathbb{N}^n$ we let
$\D_\a=\{\}$.
\item{} We let $\D_{j,\a}(A)$ or $\D_{j,\a}$ for short to
be the simplicial complex $(\D_{I_j})_{\a_j}$. Thus
\[\D_{j,\a}(A)=\{\
\s\subset \s_{\a_j}:\ \s\cup \s_{\a_j-q_{\a_j}}\in \D_{I_j} \}\
.\]
\item{} We let
\[(C^\bullet (j,\a),d_j)=(C^\bullet(\D_{j,\a}),d_j)\ .\]
\item{} Let $\t\subset [n]$ be such that $\s_{\b_i}\subset \t\s_{\b_j}$. We define
$f(\t, j, i)=\s_{\b_j} \t \widehat{\s_{\b_i}}$.
\item{} Let $\t^* \in C^r(j,\a)$. Let $w\in [n]$ be such that
$\t w\notin \D_{j,\a}$. The coefficient $ r_{j,i,
\underline{w\t}+\a_j-q_{\a_j}}$ is determined by Corollary
\ref{gen-coeff_basis}. Whenever $ r_{j,i,
\underline{w\t}+\a_j-q_{\a_j}} \neq 0$ it follows that
$\s_{\b_i}\subset \t\s_{\b_j}$ and $f(\t, j, i) \in \D_{i,\a}$ so
that
\[\chi_j(\t^*,w)=\sgn[w, \t w ]\ r_{j,i,
\underline{w\t}+\a_j-q_{\a_j}} \ \frac{\sgn[\t w ,\s_{\a_j}]}{
\sgn[f(\t w, j, i),\s_{\a_i}]} \ (f(\t w, j, i))^*\] is an an
element of
\[\sum_{i<j} C^{r+1+(|\s_{\a_i}|-|\s_{a_j}|)}(i,\a).\]
We let
\[\chi_j(\t^*)= \sum_{w\in V_{\t,
\D_{j,\a},2}}\ \chi_j(\t^*,w)\ .\]
\newline
\end{itemize}

\end{notation}

\begin{example} Let $A$ be the matrix of Example
\ref{basic_example}. Let $\a=(1,0,1,1)$. Then
 $\a_1=(1,0,1,0)$,
$\a_2=(0,0,1,1)$, $\s_{\a_1}=\{1,3\}$,  $\s_{\a_2}=\{3,4\}$ while
$\s_{\b_1}=\{4\}$ and $\s_{\b_2}=\{1\}$. $\D_{1,\a}$ is the line
segment between the vertices $1$ and $3$ while $\D_{2,\a}$
consists of the points $3$ and $4$. It follows that
$f(\{3,4\},2,1)=\{1,3\}$, an element of $\D_{1,\a}$ and $r_{2,1,
\{3,4\}}=-1/2$. Thus $\chi_2(\{3\}^*)=-{\frac{1}{2}} \{1,3\}^*$.
\end{example}

\noindent Next we turn our attention to the minimal multigraded
free resolutions of $M_A$. Let $\a\in \mathbb{Z}^n $ and
$b_{i,\a}(M_A)$ be the $\a$-graded $i$-betti number of $M_A$:
\[b_{i,\a}(M_A)=\dim_{\Bbbk} \Tor_i(M_A,k)_\a=\dim_\Bbbk H_i(M_A\otimes
K_\bullet)_\a=\dim_\Bbbk F_i\otimes \Bbbk\] where $
F_\bullet:\quad 0\mapright{} F_p\mapright{}\cdots
\cdots\mapright{}F_1\mapright{\phi_1} F_0\mapright{}
M_A\mapright{} 0\ $ is a  minimal multigraded free resolution of
$M_A$, ($\phi_1=\phi_T$). It is well known that when $I$ is a
squarefree ideal then
$b_{i,\a}(R/I)=\widetilde{H}^{|\s_\a|-i-1}(C^\bullet((\D_I)_\a))
$,  see \cite{Ho77} or \cite{MiSt05} for a proof. More precisely
there is an isomorphism of complexes:
\begin{equation}\label{graded_cohomology} (R/I\otimes
K_\bullet)_\a\cong C^\bullet((\D_I)_\a),\end{equation} such that
 \[ (R/I\otimes
K_i)_\a\cong C^{|\s_\a|-i-1}((\D_I)_\a). \] We generalize the
isomorphism (\ref{graded_cohomology}) for $M_A$. We will need the
following remark on the signs, proved essentially in \cite{Ho77}.

\begin{remark}\label{signs} Let $\ro \subset \s$, $\t=\s\setminus \ro$ and $t\in \t$. Then
\[ \sgn[t,\t] \sgn[\ro t , \s]=\sgn[ t, \ro t] \sgn[\ro, \s]\ .\]
\end{remark}
We combine the cochain complexes of $\D_{j,\a}$ to construct a new
complex.

\begin{construction} \label{def_cochain_complex} Let
$l_j=|\s_{\a_j}|-|\s_{\a_1}|$. We define
\[ C^t(A,\a):=\sum_{j=1}^s C^{t+l_j}( j, \a)\ ,\]   and let
$\d^t:\ C^t(A,\a)\longrightarrow C^{t+1}(A,\a)$ be such that for
$\t^* \in C^{t+l_j}( j, \a)$, \[\d^{t}(\t^*):=d_j^t(\t^*)
+\chi_j(\t^*)\ . \]
\end{construction}

\begin{theorem} Let $A$ be a squarefree multigraded matrix.
$(C^\bullet(A,\a),\d^\bullet)$ is a cochain complex and there is
an isomorphism of complexes $(C^\bullet(A,\a),\d^\bullet)\cong
(A\otimes K_\bullet)_\a$.
\end{theorem}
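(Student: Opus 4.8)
The plan is to build the isomorphism $(C^\bullet(A,\a),\d^\bullet)\cong (A\otimes K_\bullet)_\a$ directly from the structure already in place, and to deduce along the way that $\d$ is a differential. First I would recall that $A\otimes K_\bullet$ means the complex $F_0\otimes K_\bullet$, i.e. $\bigoplus_{i\in[s]} (Rv_i\otimes K_\bullet)$, and that $(M_A\otimes K_\bullet)_\a$ is the quotient of this by the subcomplex generated by the image of $\phi_T\otimes\id$. But the betti numbers of $M_A$ are computed from a \emph{minimal} free resolution, so $(A\otimes K_\bullet)_\a$ here should be read as the complex whose homology gives $\Tor$; concretely, I would identify $(A\otimes K_\bullet)_\a$ with $(F_0\otimes K_\bullet)_\a$ equipped with the differential induced by the Koszul differential together with the relations in $M_A$ — that is, work inside $M_A\otimes K_\bullet$. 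For each fixed $i$, the summand $(Rv_i\otimes K_\bullet)_\a$ is, by (\ref{graded_cohomology}) applied to $R/\ann$... more precisely, by Corollary \ref{gen-coeff_basis} the $\a$-graded piece of $M_A$ is spanned by the $\overline{x^\b v_j}$ with $\s_\b\in\D_j$, and one wants a decomposition of $(M_A\otimes K_\bullet)_\a$ that respects this basis.

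The key step is the following identification, carried out degree by degree. A multigraded basis element of $(F_0\otimes K_\bullet)_\a$ is $x^\gamma v_i\otimes e_L$ with $\underline{L}+\gamma = \a - \b_i$ forced, i.e. $L=\s_{\a_i-q_{\a_i}}$... no: $L$ ranges over subsets with $\s_L\subseteq\s_{\a_i}$ and the monomial part is $x^{\a_i-\underline{L}}$, so the basis of $(Rv_i\otimes K_\bullet)_\a$ in homological degree $q$ is indexed by $\s\subseteq\s_{\a_i}$ with $|\s|=q$, exactly as in Hochster's formula. Passing to $M_A$, by Corollary \ref{gen-coeff_basis}(1) we keep only those $\s$ with $\s\cup\s_{\a_i-q_{\a_i}}\in\D_i$, i.e. $\s\in\D_{i,\a}$, and this matches $C^{|\s_{\a_i}|-q-1}(i,\a)$; reindexing by $t=|\s_{\a_1}|-1-q$ gives precisely $C^{t+l_i}(i,\a)$, so $(M_A\otimes K_\bullet)_\a$ in homological degree $q$ is $\bigoplus_i C^{t+l_i}(i,\a)=C^t(A,\a)$ as a vector space. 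The nontrivial content is that under this identification the differential of $M_A\otimes K_\bullet$ becomes $\d^\bullet$. The Koszul differential applied to $x^{\a_i-\underline{\s}}v_i\otimes e_\s$ produces a sum over $t\in\s$ of terms $\pm x_t\cdot(x^{\a_i-\underline{\s}}v_i\otimes e_{\s\hat t})$; modulo the sign bookkeeping of Remark \ref{signs} this is the coboundary in $\D_{I_i}$, except that it also produces terms $x^{\a_i-\underline{\s\hat t}}v_i\otimes e_{\s\hat t}$ with $\s\hat t\notin\D_{i,\a}$, i.e. lying in $x^\delta v_i$ with $x^\delta\in I_i$. Those must be rewritten via Corollary \ref{gen-coeff_basis}(2) as $\sum_{j<i} r_{i,j,\cdot}\,\overline{x^{\cdot}v_j}$, and this rewriting is exactly what the map $\chi_i$ records — the factor $\sgn[f(\t w,j,i),\s_{\a_j}]^{-1}$ converting the sign convention on $\D_{I_i}$ to that on $\D_{I_j}$, and the shift in homological degree by $|\s_{\a_j}|-|\s_{\a_i}|=l_j-l_i$ matching the summands in $\sum_{i<j}C^{r+1+(|\s_{\a_i}|-|\s_{\a_j}|)}$. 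So the verification splits into: (a) the "diagonal" part $d^j$ is the restriction of the Koszul differential to the part landing back in $\D_{j,\a}$, which is Hochster's computation; (b) the "off-diagonal" part $\chi_j$ is the Koszul differential composed with the straightening of Corollary \ref{gen-coeff_basis}(2).

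Once the vector-space isomorphism and the compatibility of differentials are established, the statement that $(C^\bullet(A,\a),\d^\bullet)$ is a complex, i.e. $\d\circ\d=0$, is automatic: it is transported from $\partial\circ\partial=0$ in $M_A\otimes K_\bullet$. (Alternatively one could check $d_j\circ d_j=0$, $d_i\circ\chi_j+\chi_i\circ d_j=0$ on the appropriate pieces, and $\sum \chi_i\circ\chi_j=0$, but it is cleaner to get it for free.) So the proof skeleton is: (1) spell out $(A\otimes K_\bullet)_\a$ and its basis; (2) quotient by $\im\phi_T$ and use Corollary \ref{gen-coeff_basis}(1) to cut down to the basis indexed by the $\D_{j,\a}$, giving the graded vector-space identification $(M_A\otimes K_\bullet)_\a\cong C^\bullet(A,\a)$ after the reindexing $q\leftrightarrow t$; (3) compute the induced differential on a basis element $\overline{x^{\a_j-\underline\s}v_j}\otimes e_\s$: split the Koszul output into terms still supported on $\D_{j,\a}$ (these give $d_j$, via Remark \ref{signs}) and terms with support outside $\D_{j,\a}$, which get straightened by Corollary \ref{gen-coeff_basis}(2) into $\chi_j$; (4) conclude $\d^2=0$ by transport of structure.

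The main obstacle is step (3), and specifically the sign and index bookkeeping in the off-diagonal part: one must check that rewriting the "bad" Koszul terms $x^{\a_j-\underline{\s\hat t}}v_j\otimes e_{\s\hat t}$ (with $\s\hat t=\t\notin\D_{j,\a}$ after we rename, and $t=w$ the vertex being removed) via $\overline{x^{\a_j-\underline\t}v_j}=\sum_{i<j}r_{i,\dots}\overline{x^{\cdots}v_i}$ produces precisely $\chi_j(\t^*,w)$ — including that the cohomological degree lands in $C^{r+1+|\s_{\a_i}|-|\s_{\a_j}|}(i,\a)$, that the combinatorial face appearing is $f(\t w,j,i)=\s_{\b_i}\,\t w\,\widehat{\s_{\b_j}}$, and that the two sign factors $\sgn[w,\t w]$ and $\sgn[\t w,\s_{\a_j}]/\sgn[f(\t w,j,i),\s_{\a_i}]$ are exactly the ones dictated by Remark \ref{signs} applied twice (once to pass from the Koszul sign to the $\D_{I_j}$-convention, once to pass to the $\D_{I_i}$-convention). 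This is routine but delicate; everything else is formal once Corollary \ref{gen-coeff_basis} and Hochster's formula (\ref{graded_cohomology}) are invoked.
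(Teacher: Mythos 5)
Your proposal is correct and follows essentially the same route as the paper: identify the multigraded basis of $(M_A\otimes K_\bullet)_\a$ via Corollary \ref{gen-coeff_basis}, match it (with signs) to the basis of $C^\bullet(A,\a)$ after the degree reindexing, verify that the Koszul differential corresponds to $d_j+\chi_j$ (the "straightening" of the terms leaving $\D_{j,\a}$ being exactly $\chi_j$, with signs controlled by Remark \ref{signs}), and transport $\d^2=0$ from $M_A\otimes K_\bullet$. You in fact spell out the off-diagonal computation more explicitly than the paper, which leaves the diagram chase as a "routine check."
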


\begin{proof} First we note that $M_A\otimes K_\bullet$ is multigraded.
By Corollary \ref{gen-coeff_basis}, a multigraded basis for the
vector space $(M_A\otimes K_\bullet)_\a$ is
\[ \bigcup_j\  \bigcup_{\begin{array}{c} L\subset [n]\cr \a_j-\deg L\in
\mathbb{N}^n\end{array}} \{ \overline{x^{\a_j-\deg L}\ v_j}\otimes
\ e_L:\ \overline{x^{\a_j-\deg L}\ v_j}\ \in B(M_A)\}\ \]
\[=\bigcup_{\begin{array}{c} L\cr \a_j-\deg L\in \mathbb{N}^n\end{array}}
\bigcup_j\ \{ \overline{x^{\a_j-q_{\a_j}}x^{q_{\a_j}-\deg L}
v_j}\otimes\ e_L:\ \s_{\a_j}\hat{ L}\in \D_{j,\a} \}\ . \] To each
element $\overline{x^{\a_j-q_{\a_j}}x^{q_{\a_j}-\deg L}
v_j}\otimes\ e_L$ of this basis we correspond the element
$\sgn[\s_{\a_i}\setminus L, \s_{\a_i}]\ (\s_{\a_i}\setminus L)^*$
of $C^r (j,\a)$ where $r=|\s_{\a_j}|-|L|-1$. Since $(M_A\otimes
K_\bullet )_\a$ is a complex to prove our claim it suffices to
show that the following diagram commutes:
\[ \begin{array}{ccc} \cr C^{t} (A,\a) &\mapright{\d^t} &C^{t+1} (A,\a) \cr
\mapdown{} &
 &\mapdown{} \cr (M_A\otimes K_{|\s_{\a_1}|-t-1} )_\a
 &\mapright{1_{M_A}\otimes\th} &(M_A \otimes
K_{|\s_{a_1}|-t})_\a.\end{array}\] This is a routine check, using
Remark \ref{signs}.
\end{proof}

The following is now immediate and generalizes the well known
formula of the cyclic case.

\begin{corollary}\label{def_cochain_betti} Let $A$ be as
above. Then
\[b_{i,\a} (M_A)=H^{|\s_{\a_1}|-i-1}(C^\bullet(A,\a)).\]
\end{corollary}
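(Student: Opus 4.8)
The plan is to derive Corollary~\ref{def_cochain_betti} directly from the preceding theorem together with the standard description of multigraded Betti numbers via the Koszul complex. Recall that $b_{i,\a}(M_A)=\dim_\Bbbk H_i(M_A\otimes K_\bullet)_\a$, so it suffices to identify the $i$th homology of the complex $(M_A\otimes K_\bullet)_\a$ in terms of the cohomology of $(C^\bullet(A,\a),\d^\bullet)$. By the theorem there is an isomorphism of complexes $(C^\bullet(A,\a),\d^\bullet)\cong (M_A\otimes K_\bullet)_\a$, so the only remaining work is bookkeeping about the homological degree shift: I must track which cohomological index $t$ of $C^\bullet(A,\a)$ corresponds to which Koszul homological index $i$.

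First I would unwind the indexing conventions set up in Construction~\ref{def_cochain_complex} and in the proof of the theorem. In that proof, the basis element $\overline{x^{\a_j-\deg L}v_j}\otimes e_L$ of $(M_A\otimes K_{|L|})_\a$ is matched with an element of $C^r(j,\a)$ where $r=|\s_{\a_j}|-|L|-1$, and $C^r(j,\a)$ sits inside $C^{r-l_j}(A,\a)$ with $l_j=|\s_{\a_j}|-|\s_{\a_1}|$. Hence $C^{r-l_j}(A,\a)$ receives contributions from $(M_A\otimes K_{|L|})_\a$ exactly when $r-l_j=|\s_{\a_j}|-|L|-1-(|\s_{\a_j}|-|\s_{\a_1}|)=|\s_{\a_1}|-|L|-1$, i.e.\ when $|L|=|\s_{\a_1}|-(r-l_j)-1$. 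Writing $t=r-l_j$ for the cohomological degree in $C^\bullet(A,\a)$ and $i=|L|$ for the Koszul homological degree, this reads $t=|\s_{\a_1}|-i-1$, equivalently $i=|\s_{\a_1}|-t-1$. This is precisely the shift displayed in the commutative square at the end of the theorem's proof, where $C^t(A,\a)$ maps to $(M_A\otimes K_{|\s_{\a_1}|-t-1})_\a$.

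Next I would assemble the conclusion. Since the vertical maps in that square are the isomorphisms furnishing $(C^\bullet(A,\a),\d^\bullet)\cong (M_A\otimes K_\bullet)_\a$, and since $\d^t$ corresponds under these isomorphisms to $1_{M_A}\otimes\th$ (lowering Koszul degree by one, raising cochain degree by one), taking cohomology in degree $t$ on the left equals taking homology in degree $|\s_{\a_1}|-t-1$ on the right:
\[ H^t(C^\bullet(A,\a))\cong H_{|\s_{\a_1}|-t-1}(M_A\otimes K_\bullet)_\a=b_{|\s_{\a_1}|-t-1,\a}(M_A). \]
Setting $i=|\s_{\a_1}|-t-1$, so that $t=|\s_{\a_1}|-i-1$, yields exactly $b_{i,\a}(M_A)=\dim_\Bbbk H^{|\s_{\a_1}|-i-1}(C^\bullet(A,\a))$, which is the asserted formula.

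The only genuine subtlety—and the step most prone to an off-by-one error—is pinning down that the degree shift really is $|\s_{\a_1}|-i-1$ rather than, say, $|\s_\a|-i-1$ (the formula familiar from the cyclic case recalled in~(\ref{graded_cohomology})). The point is that the complex $C^\bullet(A,\a)$ has been normalized so that the $j=1$ summand appears with zero shift, $l_1=0$, and within that summand one has the classical identification $C^r(1,\a)\cong(M_A\otimes K_i)_\a$ with $r=|\s_{\a_1}|-i-1$; the shifts $l_j$ for $j>1$ are chosen exactly so that all summands line up homologically with this one. So once the $j=1$ case is checked against the known cyclic formula applied to $R/I_1$, the general indexing is forced, and the corollary follows with no further computation.
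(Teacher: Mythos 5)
Your proposal is correct and is exactly the argument the paper intends: the corollary is stated as an immediate consequence of the preceding theorem's isomorphism of complexes, and the degree shift $t=|\s_{\a_1}|-i-1$ you verify is precisely the one displayed in the commutative square of that theorem's proof. Your extra care in checking the indexing against the $j=1$ summand and the cyclic-case formula is sound bookkeeping, not a deviation from the paper's route.
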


When $E^\bullet$ is a complex, by $E^\bullet[-1]$ we mean the
complex $E^\bullet$ pushed in homological degree $-1$: $E^r[-1]:=
C^{r-1}$. This way we can think of $C^\bullet(A,\a)$ as the
cochain complex that results by a succession of mapping cones
$\mathbb{M}(f_{i})$. We start with
$\mathbb{M}(f_{1})=C^\bullet(1,\a)[-1]$ and once
$\mathbb{M}(f_{i-1})$ has been constructed then $\mathbb{M}(f_i)$
is the cokernel of
\[f_i: (C^\bullet(i,\a)[-|\s_{\a_i}|+|\s_{a_1}|],-d)\mapright{d_i'}
\mathbb{M}(f_{i-1})[-1].\]  We note that if $\a\in \mathbf{N}^n$
is not squarefree then for each $i$, $\D_{i,\a}$ is a cone and the
cohomology of $C^\bullet(i,\a)$ is everywhere zero. It follows
that the minimal resolution of $M$ is supported in squarefree
degrees, see \cite{BrHe95}, \cite{Ya00}.

\begin{example} Let $A$ be the matrix of Example
\ref{basic_example} and let $\a=(1,0,1,1)$. Then
 $l_1=l_2=0$,
 \[C^\bullet (1,\a):\quad 0\mapright{} \Bbbk\mapright{}
 \Bbbk^2\mapright{}\Bbbk \mapright{}0\]
 \[ C^\bullet (2,\a):\quad 0\mapright{} \Bbbk\mapright{}
 \Bbbk^2\mapright{} 0\]
  and
  \[ C^\bullet (A,\a):\quad 0\mapright{} \Bbbk^2\mapright{}
 \Bbbk^4\mapright{}\Bbbk \mapright{}0\ . \]

It follows that  $\dim_{\Bbbk}H^0 (C^\bullet(A,\a))=1$ and
$b_{1,\a}(M_A)=1$ as expected.
\end{example}

\bigskip

\section{Local cohomology of $M$}

Let $A=(c_{ij}x^{\a_{ij}})$ be an $s\times l$
 multigraded squarefree matrix, $T$ a squarefree solution of
 $E_A$, $\phi=\phi_T$,
 $M_A=\coker  {\phi}$ and we let
$I_1,\ldots, I_s$ be the squarefree monomial ideals
 as in the previous section. We  proceed with the notation and related remarks.

\begin{notation}
\end{notation}
\begin{itemize}
\item{} Let $F\subset [n]$. Let $H$ be any $R$-module.
We write $H_F$ for the localization of $H$ at the powers of
$x^{\underline{F}}$. In particular for $F=\{ i_1,\ldots, i_t \}$,
$R_F=\Bbbk[x_1,\ldots, x_n, x_{i_1}^{-1},\ldots, x_{i_t}^{-1}]$.
Let $u\in H$. We write $u_F$ to denote the image of $u$ in $H_F$
under the natural homomorphism $H\mapright{} H_F$. If $\phi:
H_1\mapright{} H_2$ is an $R$-homomorphism we write $\phi_F:
(H_1)_F\mapright{} (H_2)_F$ for the induced homomorphism.
\item{} We let
$A_F= (c_{ij}{x^{\a_{ij}}_F})$. We recall that $A$ is the matrix
of $\phi: \ F_1\mapright{} F_0 $ with respect to  bases $\{ w_i:\
i=1,\ldots, l\}$  of $F_1$ and $\{v_j: \ j=1,\ldots, s\}$ of
$F_0$. Thus  $A_F$ is the matrix of $\phi_F: \ (F_1)_F\mapright{}
(F_0)_F $ with respect to the bases $\{ (w_i)_F:\ i=1,\ldots, l\}$
of $(F_1)_F$  and $\{(v_j)_F: \ j=1,\ldots, s\}$ of $(F_0)_F$ and
the multidegrees of $(w_i)_F$, $(v_j)_F$, $i\in [l]$, $j\in [s]$
are squarefree.  For each $j$ we let $I_{j,F}=(I_j)_F$.
\item{} Let $\D$ be a simplicial complex on $[n]$ and let $\a\in
\mathbb{Z}^n$. We let
\[\D^\a=\{ \t: \ \t\cap\s_{a^-}=\emptyset, \
\t\cup \s_\a\in \D\}\ .\]  We note that if $\a=-\a^-$ and
$\s_{\a^+}=\emptyset$ then $\D^\a$ is by definition the link of
$\s_{\a}$ in $\D$.
\item{} Let $F, G\subset [n]$. Let $N$ be an $R$-module. We let $\th_{F,G}: \
M_F\mapright{} M_G$, $\th_{F,G}(u_F)=u_G$  if $G=Fh$ and zero
otherwise. We let $K(x^\infty, N)$ to be the complex $$
K(x^\infty, N):\ 0\rightarrow N\mapright{\th^0}
\bigoplus_{\begin{array}{c} |F|=1\cr F\subset[n]\end{array}}
N_F\mapright{\th^1} \cdots\rightarrow N_{[n]} \rightarrow 0 $$
where $\th^r|_{N_F}=(\th_{F,G}) $. It is well known that for any
multigraded module $N$ and $\a\in \mathbb{Z}^n$,
\[ H^i_m(N)_\a=H^i( K(x^\infty, N)_\a)\ , \] see \cite{BrHe98}.
Moreover when $I$ is a squarefree monomial ideal, then by
reordering the variables of $R$ so that the indices of $\s_{\a^-}$
are at the end of $[n]$ one gets:
\begin{equation}\label{graded_local_cohomology}
K(x^\infty, R/I)_\a\cong
C^\bullet((\D_I)^\a)[-|\s_{\a^-}|-1]\end{equation}  and \[
\dim_{\Bbbk} H^i_m (R/I)_\a=\dim_{\Bbbk}
H^{i-|\s_{\a^-}|-1}(C^\bullet((\D_I)^\a))\ ,\]  see \cite{St83} or
\cite{BrHe98}.
\item{}  We recall from the
previous section that if $\a\in \mathbf{Z}^n$ then $\a_i=\a-\b_i$.
We denote by  $ \D^\a_j$ the complex $ (\D_{I_j})^{\a_j}$. Thus
\[\D_j^\a=\{ \t: \ \t\cap\s_{a_{j^-}}=\emptyset, \
\t\cup \s_{\a_j}\in \D_{I_j}\}\ .\] If $F\subset[n]$ we let
$B_{j,F}:=\{\overline{x^{\g} v_{j,F}}:\ x^{\g}\in R_F,\
x^{\g}\notin I_{j,F}\}$. We let
\[B_F(A)=\bigcup_j B_{j,F}\ .\]

\end{itemize}

\noindent We note that $B_{j,F}= \{\overline{x^\g v_{j,F}}:\
\s_{\g^-}\subset F,\ F\cup \s_{\g^+}  \in \D_{I_j}\}$. Moreover
$\deg(\overline{x^{\g} v_{j,F}})=\g+\b_j$. Thus the elements of
$B_F(A)$ of degree $\a$ form the set $B_F(A)_\a=B_F(A)\cap
\{\overline{x^{\g} v_{i,F}}:\ \g+\b_i=\a,\ i=1,\ldots, s\}= \{
\overline{x^{\a_i} v_{i}}_F:\ \s_{\a_i^-}\subset F,\
F\widehat{\s_{\a_i^-}}\in \D^\a_i,\ i=1,\ldots,s\} $. In the next
Theorem we determine a $\Bbbk$-basis for $K(x^\infty, M_A)^r_\a$
and its various homological components.

\begin{theorem}\label{k_basis_loc} Let  $A$ be as above. Then
\begin{enumerate}
\item{} $ B_F(A)$ is a multigraded $\Bbbk$-basis for $(M_A)_F$.
Moreover if $x^\g\in I_{i,F}$ then
\[\overline{x^{\g} v_{i,F}}=\sum_{j<i} r_{i,j,{\g^+} +
{\underline{F}}}\ \overline{x^{\g + \b_i - \b_j} v_{j,F}}\ .\]
\item{} Let $\a\in \mathbb{Z}^n$. The set \[B_{\a}(A)={\ds\bigcup_{F\subset [n]}} B_F(A)_\a\] is a
 $\Bbbk$-basis for $K(x^\infty, M_A)_\a$.
\item{} The set \[B_{\a,r}(A)=\{ \overline{x^{\a_i} v_{i}}_F: F\subset[n],
|F|=r,\ \s_{\a_i^-}\subset F,\ F\widehat{\s_{\a_i^-}}\in
\D^\a_i\}\] is a $\Bbbk$-basis for $K(x^\infty, M_A)^r_\a$.
\end{enumerate}
\end{theorem}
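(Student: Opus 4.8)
The plan is to prove the three statements in order, bootstrapping from the results already established for $M_A$ itself, mainly Corollary~\ref{gen-coeff_basis} and the structure of $\D_{I_j}$.

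\textbf{Part (1).} Localization is exact, so $(M_A)_F = (F_0)_F / \im(\phi_F)$, and the multidegrees of the localized basis elements are squarefree. The key observation is that the simplicial complex attached to $I_{j,F} = (I_j)_F$ is obtained from $\D_{I_j}$ by the ``deletion/link'' operation on $F$: a monomial $x^\g$ with $\s_{\g^-}\subset F$ lies outside $I_{j,F}$ precisely when $F\cup\s_{\g^+}\in\D_{I_j}$. Given this, I would argue that $B_{j,F}$ exhausts $(M_A)_F$ by taking any element $\overline{x^\g v_{j,F}}$ with $x^\g\in I_{j,F}$ and clearing denominators: after multiplying by a suitable power of $x^{\underline F}$ we land in $M_A$ with a monomial in $I_j$ (since $x^{\underline F}$ is a unit in $R_F$), then apply Corollary~\ref{gen-coeff_basis}(2) to rewrite it in terms of lower-indexed $v$'s, and localize back. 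The bookkeeping that $r_{i,j,\g^+ + \underline F}$ is the correct coefficient comes from tracking exactly which power of $x^{\underline F}$ was used and the compatibility of the $r_{i,j,\a}$'s under multiplication by monomials supported inside an existing face (this is essentially the ``repeated application'' remark in the proof of Corollary~\ref{gen-coeff_basis}). Linear independence is inherited from the linear independence of $B(M_A)$ over $R$ together with the flatness of localization.

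\textbf{Part (2).} By definition $K(x^\infty, M_A)_\a = \bigoplus_{F\subset[n]} \big((M_A)_F\big)_\a$, where the direct sum is as $\Bbbk$-vector spaces. Applying Part (1) to each $F$ gives that $\bigcup_F B_F(A)_\a$ is a $\Bbbk$-basis of each graded piece $\big((M_A)_F\big)_\a$, and summing over $F$ yields the claim; the only point to check is that distinct $F$'s contribute distinct basis vectors, which is automatic since the localization component is part of the data.

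\textbf{Part (3).} This is the homological refinement: $K(x^\infty, M_A)^r_\a = \bigoplus_{|F|=r} \big((M_A)_F\big)_\a$, so this is just Part (2) restricted to the summands with $|F|=r$, using the explicit description $B_F(A)_\a = \{\overline{x^{\a_i}v_i}_F : \s_{\a_i^-}\subset F,\ F\widehat{\s_{\a_i^-}}\in\D^\a_i\}$ that was worked out in the paragraph preceding the theorem (which in turn is immediate from the formula $B_{j,F}=\{\overline{x^\g v_{j,F}}: \s_{\g^-}\subset F,\ F\cup\s_{\g^+}\in\D_{I_j}\}$ and $\g = \a_i$, $\g+\b_i=\a$).

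\textbf{Main obstacle.} The substantive content is all in Part (1): identifying $(\D_{I_j})$-in-$F$ correctly and, more delicately, verifying that the rewriting coefficient is exactly $r_{i,j,\g^+ + \underline F}$ rather than some other $r$-value. The issue is that $x^\g$ may not itself be squarefree after clearing denominators, so one must invoke the fact (from Corollary~\ref{gen-coeff_basis}, via the squarefreeness of $M_A$) that $r_{i,j,\a}$ depends only on the ``squarefree-ization direction'' and is stable under multiplying $\a$ by monomials supported on faces already present — i.e., $r_{i,j,\a}=r_{i,j,\a+\delta}$ whenever $\s_\delta$ keeps both the source face outside $\D_i$ and the target relation valid. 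I would isolate this as a short sublemma before doing the localization argument.
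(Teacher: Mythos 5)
Your proposal is correct and follows essentially the same route as the paper: part (1) by clearing denominators and invoking Corollary~\ref{gen-coeff_basis}, with parts (2) and (3) obtained from part (1) by degree bookkeeping over the summands $F$. The only difference is that the stability sublemma you flag is unnecessary: the paper clears denominators by multiplying by exactly $x^{\g^-}x^{\underline{F}}$, so Corollary~\ref{gen-coeff_basis} is applied at the degree $\g^+ + \underline{F}$ itself and the coefficient $r_{i,j,\g^+ + \underline{F}}$ appears directly.
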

\begin{proof}
We prove the first claim. The rest follows by degree
consideration.
 First we prove
linear independence. Suppose that $x^\g \notin I_{j, F}$. Let
$\b\in \mathbb{N}^n$ such that $\s_{\g^-}\subset \s_{\b}\subset
F$. Then $x^\b x^\g\notin I_j$. Thus by clearing denominators, any
possible linear dependence relation on the elements of $B_F(A)$
corresponds to a linear dependence relation on the elements of
$B(M_A)$.

Next we show that $B_F(A)$ spans $(M_A)_F$. Let $\g\in
\mathbb{N}^n$, such that $x^\g$ is a generator of $I_{i,F}$. Then
$x^{\g^+}x^{\underline{F}}$ is a generator of $I_i$.  By Corollary
\ref{gen-coeff_basis} it follows that
\[\overline{x^{{\g^+}+{\underline{F}}}\ v_{i}}=\sum_{j<i} r_{i,j,{\g^+} +
{\underline{F}}}\ \overline{x^{\g^+ +\b_i-\b_j}\ v_{j}}\ .\]
Localizing at the powers of $x^{\underline{F}}$ and dividing by
$x^{\g^-}x^{\underline{F}}$ we get the desired claim.
\end{proof}

Next we describe the complex that will be used to compute
$H^i_m(M_A)_\a$. First we need to define one more sign: let
$\s\subset F$. We reorder $F$ so that the elements of $\s$ are at
the end of $F$. If the number of transpositions needed to do this
is even we let $t(\s,F):=1$, otherwise we let $t(\s,F):=-1$. If
$h\notin F$, it is direct to verify that
\[t(\s,F) \sgn[h, F h\hat{ \s}]=t(\s, Fh) \sgn[h, Fh]\ .\]

\begin{theorem}\label{local-cohom-com} Let $A$ be as above and   $\a\in \mathbb{Z}^n$. Let
$(L_j^\a)^\bullet=C^\bullet (\D^a_j)$ and
$l_j^-=|\s_{a_i^-}|-|\s_{a_1^-}|$. For each $r\in \mathbb{Z}$ we
let
\[(L^\a)^r:=\sum_{i=1}^r L_i^{r-{l_i^-}}\]
and
 $d^r: (L^\a)^r\mapright{} (L^\a)^{r+1}$  be  such that when
  $\t\in \D^\a_i$, $|\t|=r+1-{l_i^-}$
then
\[ \t^*\mapsto \sum_{h\in V_{\t, \D_{i}^\a, 1}  }\sgn[h, \t h]\ (\t h)^*
\]
\[+\ \sum_{{h\in V_{\t, \D_{i}^\a, 2}  }}\sgn[h, \t h]\ \sum_{ j<i } r_{i,j,
{\a_i^-+\underline{\t h}}}\ \frac{t(\s_{\a_i^-},\t h\
\s_{\a_i^-})}{t(\s_{\a_j^-},\t h \s_{\a_i^-})}, (\t h\
\s_{\a_i^-}\ \widehat{\s_{\a_{j}^-}})^*\  .\]
$ (L^\a(A))^\bullet,
d^\bullet)$ is a cochain complex and
\[\dim_{\Bbbk} H^i_m (M_A)_\a=\dim_{\Bbbk} H^{i-|\s_{\a_1^-}|-1}(L^\a(A))^\bullet).\]
\end{theorem}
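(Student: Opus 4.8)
The plan is to mirror exactly the strategy used for Betti numbers in the previous section, replacing the Koszul complex $K_\bullet$ with the stable Koszul (\v{C}ech) complex $K(x^\infty,-)$ and the simplicial complexes $\D_{j,\a}$ with their localized analogues $\D_j^\a$. First I would invoke Theorem \ref{k_basis_loc}: part (3) gives an explicit multigraded $\Bbbk$-basis $B_{\a,r}(A)$ for $K(x^\infty,M_A)^r_\a$, indexed by pairs $(i,F)$ with $|F|=r$, $\s_{\a_i^-}\subset F$, and $F\widehat{\s_{\a_i^-}}\in\D^\a_i$. To such a basis element $\overline{x^{\a_i}v_i}_F$ I associate the cochain generator $t(\s_{\a_i^-},F)\,(F\widehat{\s_{\a_i^-}})^*$ of $(L^\a_i)^{r-l_i^-}=C^{r-l_i^-}(\D^\a_i)$; here the dimension shift $r-l_i^-$ is forced by $|F\widehat{\s_{\a_i^-}}|=r-|\s_{\a_i^-}|$ and the normalization $l_i^-=|\s_{\a_i^-}|-|\s_{\a_1^-}|$, so that the homological index is the same $|\s_{\a_1^-}|$-independent quantity across all $i$. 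This gives a degree-preserving $\Bbbk$-linear isomorphism $K(x^\infty,M_A)^\bullet_\a\cong (L^\a(A))^{\bullet+|\s_{\a_1^-}|+1}$.

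Next I would check that this isomorphism intertwines the differentials, i.e. that the square
\[
\begin{array}{ccc}
(L^\a(A))^{r} &\mapright{d^r} & (L^\a(A))^{r+1}\\
\mapdown{} & & \mapdown{}\\
K(x^\infty,M_A)^{r+|\s_{\a_1^-}|+1}_\a &\mapright{\th^{\bullet}} & K(x^\infty,M_A)^{r+|\s_{\a_1^-}|+2}_\a
\end{array}
\]
commutes. The \v{C}ech differential sends $\overline{x^{\a_i}v_i}_F$ to $\sum_{h\notin F}\sgn[h,Fh]\,\overline{x^{\a_i}v_i}_{Fh}$, and one must split the sum according to whether $Fh\widehat{\s_{\a_i^-}}$ still lies in $\D^\a_i$ (giving the $V_{\t,\D_i^\a,1}$ term, the ordinary cochain coboundary) or not. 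In the latter case $x^{\a_i}/x^{\underline{Fh}}\in I_{i,Fh}$, so part (1) of Theorem \ref{k_basis_loc} rewrites $\overline{x^{\a_i}v_i}_{Fh}=\sum_{j<i} r_{i,j,\a_i^-+\underline{\t h}}\,\overline{x^{\a_j}v_j}_{Fh}$, and translating each summand back through the basis correspondence produces exactly the second sum in the formula for $d^r$, with the sign ratio $t(\s_{\a_i^-},\t h\,\s_{\a_i^-})/t(\s_{\a_j^-},\t h\,\s_{\a_i^-})$ emerging from reconciling the chosen orderings of $Fh$ relative to $\s_{\a_i^-}$ and $\s_{\a_j^-}$. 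The bookkeeping identity I stated just before the theorem, $t(\s,F)\sgn[h,Fh\hat\s]=t(\s,Fh)\sgn[h,Fh]$, is precisely what makes the $V_{\t,\D_i^\a,1}$ part match, and an analogous manipulation (together with the uniqueness and vanishing properties of the $r_{i,j,\a}$ from Corollary \ref{gen-coeff_basis}) handles the $V_{\t,\D_i^\a,2}$ part. Since $K(x^\infty,M_A)_\a$ is already a complex, the commuting square forces $d^\bullet\circ d^\bullet=0$, so $(L^\a(A))^\bullet$ is a complex; and the isomorphism of complexes then yields
\[
\dim_{\Bbbk} H^i_m(M_A)_\a=\dim_{\Bbbk} H^i(K(x^\infty,M_A)_\a)=\dim_{\Bbbk} H^{i-|\s_{\a_1^-}|-1}((L^\a(A))^\bullet),
\]
using $H^i_m(N)_\a=H^i(K(x^\infty,N)_\a)$ from \cite{BrHe98}.

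The main obstacle is purely sign-and-index management: tracking the three competing conventions — the \v{C}ech sign $\sgn[h,Fh]$, the reordering signs $t(\s,F)$, and the cochain signs $\sgn[h,\t h]$ — through the rewriting step, and verifying that the shift $l_i^-$ is chosen consistently so that the components $L_i^{r-l_i^-}$ really do assemble into a single complex rather than a bigraded object. As in the Betti-number case, once the basis dictionary of Theorem \ref{k_basis_loc} is in hand this is a routine but delicate check, analogous to the verification of Equation \ref{graded_local_cohomology} in the cyclic case \cite{St83}; no new idea beyond the mapping-cone picture described after Corollary \ref{def_cochain_betti} is needed, and indeed $(L^\a(A))^\bullet$ can likewise be built as an iterated mapping cone of the maps induced by the coefficients $r_{i,j,\a}$ on the complexes $C^\bullet(\D^\a_i)$.
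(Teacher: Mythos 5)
Your proposal is correct and follows essentially the same route as the paper: both rest on the vector-space isomorphism $K(x^\infty,M_A)^r_\a\cong\sum_i(L_i^\a)^{r-|\s_{\a_i^-}|-1}$ coming from the basis of Theorem \ref{k_basis_loc}, $\overline{x^{\a_i}v_i}_F\mapsto (F\widehat{\s_{\a_i^-}})^*$ (up to the reordering sign $t(\s_{\a_i^-},F)$, which you make explicit and the paper leaves implicit via the identity stated just before the theorem), followed by the commuting square with the \v{C}ech differential. The paper declares that diagram chase routine, whereas you spell out the case split over $V_{\t,\D_i^\a,1}$ versus $V_{\t,\D_i^\a,2}$ and the use of part (1) of Theorem \ref{k_basis_loc}; the substance is the same.
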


\begin{proof}
There is an isomorphism of vector spaces
\[ K(x^\infty, M_A)^r_\a\cong \sum_i (L_i^\a)^{r-|\s_{a_i^-}|-1}\]
where
\[\overline{x^{\a_i} v_{i,F}}\mapsto
(F\widehat{ \s_{a_i^-}})^*\ .\] It is routine to show  that the
following diagram commutes:
\[ \begin{array}{ccc} \cr (L^{a})^t  &\mapright{d^t} &(L^{a})^{t+1}  \cr
\mapdown{} &
 &\mapdown{} \cr K(x^\infty, M_A)^{t+|\s_{a_1^-}|+1}_\a
 &\mapright{} &K(x^\infty, M_A)^{t+|\s_{a_1^-}|+2}_\a\ .\end{array}\]
\end{proof}

\begin{example} Let $A$ be the matrix of Example
\ref{basic_example} and let $\a=(0,-1,-1,0)$. Then
$\a_1=(0,-1,-1,-1)$, $\a_2=(-1,-1,-1,0)$, $l_1^-=l_2^-=0$.
Moreover $\D_1^\a=\D_2^\a=\{ \emptyset\}$,
\[ (L^\a)^\bullet:\quad 0\mapright{} {\Bbbk}^2
\mapright{}0\ ,\] and $ \dim_{\Bbbk} H^{-1}(L^\a)=2$. It follows
that $ \dim_{\Bbbk} H^{3}_m(M_A)_\a=2$. We do in more detail the
case for $\a=(0,0,0,0)$.  Here $\a_1=(0,0,0,-1)$,
$\a_2=(-1,0,0,0)$, $l_1^-=l_2^-=0$, $\s_{\a_1^-}=\{4\}$ and
$\s_{\a_2^-}=\{1'\}$. $\D_1^\a$ has facets the boundary of the
triangle $\{1,2,3\}$ while the facets of $\D_2^\a$ are $\{ 4\}$
and $\{2,3\}$. We have
\[
L^\a: \ 0\mapright{} \Bbbk^2 \longrightarrow \Bbbk^6
\longrightarrow \Bbbk^4 \mapright{} 0 \ \] with zero cohomology at
all homological degrees. For $\t=\{2\}\in \D_2^\a$, $V_{\t,
\D_2^\a,1}=\{3\}$, $V_{\t, \D_2^\a,2}=\{4\}$  and $
d^0(\t^*)=\t_1-\t_2$ where $\t_2=\{ 2, 3\}$ (in $\D_2^\a$) and
$\t_1=\{ 1,2\}$ (in $\D_1^\a$).
\end{example}

We finish this section with a corollary whose proof is immediate.
\begin{corollary} Let $A$  be as above. If for some $\a\in
\mathbb{Z}^n$, $H^i_m (M)_\a\neq 0$, then $\dim_{\Bbbk} H^i_m
(M_A)_\b=\dim_{\Bbbk} H^i_m (M_A)_\a$ for all $\b\in \mathbb{Z}^n$
such that $\s_{\b^+}=\s_{\a^+}$ and $\s_{\b^-}=\s_{\a^-}$.
\end{corollary}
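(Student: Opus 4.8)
The final statement to prove is the corollary: if $H^i_m(M_A)_\a \neq 0$ for some $\a$, then $\dim_\Bbbk H^i_m(M_A)_\b = \dim_\Bbbk H^i_m(M_A)_\a$ for all $\b$ with $\s_{\b^+} = \s_{\a^+}$ and $\s_{\b^-} = \s_{\a^-}$.

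The key observation is that the complex $(L^\a(A))^\bullet$ from Theorem \ref{local-cohom-com}, whose cohomology computes $H^i_m(M_A)_\a$, depends on $\a$ only through the data $\s_{\a_j^+}$ and $\s_{\a_j^-}$ for $j \in [s]$ — i.e., through the simplicial complexes $\D_j^\a = (\D_{I_j})^{\a_j}$, the shift integers $l_j^- = |\s_{\a_j^-}| - |\s_{\a_1^-}|$, and the scalars $r_{i,j,\a_i^- + \underline{\t h}}$. So I would first note that $\a_j = \a - \b_j$ implies $\s_{\a_j^+}$ and $\s_{\a_j^-}$ are determined by $\s_{\a^+}$, $\s_{\a^-}$, and the fixed squarefree degrees $\s_{\b_j}$; hence if $\s_{\b^+} = \s_{\a^+}$ and $\s_{\b^-} = \s_{\a^-}$ then $\s_{\b_j^+} = \s_{\a_j^+}$ and $\s_{\b_j^-} = \s_{\a_j^-}$ for every $j$. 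Consequently $\D_j^\b = \D_j^\a$ and $l_j^- $ is unchanged.

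The plan then is to check that the differentials of $(L^\a(A))^\bullet$ and $(L^\b(A))^\bullet$ coincide. The link-term part $\sum_{h} \sgn[h,\t h](\t h)^*$ is purely combinatorial and involves only the $\D_j^\a$, which we have just seen are unchanged. For the second sum, I would show the relevant coefficient $r_{i,j,\a_i^- + \underline{\t h}}$ depends only on the supports: by Corollary \ref{gen-coeff_basis}, $r_{i,j,\g}$ is defined by the rewriting $\overline{x^\g v_i} = \sum_{j<i} r_{i,j,\g}\overline{x^{\g+\b_i-\b_j}v_j}$, and since $M_A$ is squarefree, $(M_A)_\g \cong (M_A)_{q_\g}$ canonically (the isomorphism is multiplication by a monomial which is a bijection on a squarefree module), so $r_{i,j,\g} = r_{i,j,q_\g}$; applied with $\g = \a_i^- + \underline{\t h}$ versus $\g' = \b_i^- + \underline{\t h}$, these have the same support once $\s_{\a_i^-}$ and $\s_{\b_i^-}$ agree and $\t h$ is fixed, hence the same scalar. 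The sign ratios $\sgn[\cdot]$ and $t(\cdot,\cdot)$ likewise depend only on the index sets $\t h$, $\s_{\a_i^-} = \s_{\b_i^-}$, $\s_{\a_j^-} = \s_{\b_j^-}$. Therefore the two cochain complexes are literally equal, so all their cohomology groups — in particular the degree computing $H^i_m$, namely $i - |\s_{\a_1^-}| - 1 = i - |\s_{\b_1^-}| - 1$ — have the same dimension, giving $\dim_\Bbbk H^i_m(M_A)_\b = \dim_\Bbbk H^i_m(M_A)_\a$.

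The main obstacle is verifying carefully that the scalars $r_{i,j,\cdot}$ only see the support of their subscript; this is where squarefreeness of $M_A$ is essential and is really the content of Corollary \ref{gen-coeff_basis} combined with the defining bijectivity property of squarefree modules, so I would state it as a short lemma (or inline remark) that $r_{i,j,\g} = r_{i,j,q_\g}$ for all $\g \in \mathbb{Z}^n$, and then the corollary is immediate. Since the proof is this routine once that point is isolated, presenting it as "the proof is immediate" — as the paper does — is justified, though I would at minimum point the reader to the support-invariance of the $\D_j^\a$, the shifts $l_j^-$, and the structure constants.
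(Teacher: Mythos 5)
Your approach is the one the paper intends (the paper offers no written argument, declaring the proof immediate): Theorem \ref{local-cohom-com} computes $H^i_m(M_A)_\a$ as the cohomology of $(L^\a)^\bullet$, and one verifies that every ingredient of that complex --- the complexes $\D_j^\a$, the shifts $l_j^-$, the structure constants $r_{i,j,\cdot}$ and the signs --- depends on $\a$ only through $\s_{\a^+}$ and $\s_{\a^-}$. Your isolation of the support-invariance of the structure constants, namely $r_{i,j,\g}=r_{i,j,q_\g}$ via the bijectivity defining squarefreeness and the uniqueness clause of Corollary \ref{gen-coeff_basis}, is exactly the point that deserves to be made explicit, and it is correct.

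One step, however, is false as stated: from $\s_{\b^+}=\s_{\a^+}$ and $\s_{\b^-}=\s_{\a^-}$ you cannot conclude that the positive parts of $\a-\b_j$ and $\b-\b_j$ have the same support. If $a_k=2$, $b_k=1$ and $(\b_j)_k=1$, then $k$ lies in the support of $(\a-\b_j)^+$ but not in that of $(\b-\b_j)^+$; consequently $\supp(\a-\b_j)\ne\supp(\b-\b_j)$ and the complexes $\D_j^\a$ and $\D_j^\b$, which involve $\supp(\a_j)$ and not merely $\s_{\a_j^-}$, need not coincide. Only the negative supports are support-determined in general: $\s_{(\a-\b_j)^-}=\s_{\a^-}\cup(\s_{\b_j}\smallsetminus\supp(\a))$. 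The repair is to use the hypothesis $H^i_m(M)_\a\neq 0$ before comparing complexes: for a squarefree module this forces $\a\in-\mathbb{N}^n$ (by local duality and the squarefreeness of $\Ext^{n-i}_R(M,\omega_R)$, cf.\ \cite{Ya00}), hence $\s_{\a^+}=\emptyset$, hence also $\b\in-\mathbb{N}^n$; then $\a-\b_j$ and $\b-\b_j$ are nonpositive for every $j$, all positive parts vanish, their negative supports equal $\s_{\a^-}\cup\s_{\b_j}$, and your identification of the two complexes goes through verbatim. Without this reduction the claimed literal equality of $(L^\a)^\bullet$ and $(L^\b)^\bullet$ fails for general $\a,\b$ with equal supports, so the reduction should be stated rather than bypassed.
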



\end{document}